\definecolor{mintgreen}{RGB}{152,255,152}
\definecolor{pinksalmon}{RGB}{255,102,102}
\definecolor{hueso}{RGB}{245,245,220}
\definecolor{marfil}{RGB}{255,253,208}
\definecolor{amarillo}{RGB}{255,255,0}
\definecolor{canaryyellow}{rgb}{1.0, 0.94, 0.0}
\theoremstyle{plain}
\newtheorem{thm}{Theorem}[section]
\newtheorem{lemma}[thm]{Lemma} 
\newtheorem{prop}[thm]{Proposition}
\theoremstyle{remark}
\newtheorem{remark}[thm]{Remark}
\theoremstyle{definition}
\newcommand{\beq}{\begin{equation}}
\newcommand{\eeq}{\end{equation}}
\newcommand{\beqn}{\begin{equation*}}
\newcommand{\eeqn}{\end{equation*}}
\newcommand{\bre}{\begin{remark}}
\newcommand{\ere}{\end{remark}}
\newcommand{\beqno}[1]{\begin{equation}\label{#1}}
\newcommand{\mref}[1]{(\ref{#1})}
\newcommand{\veck}{{\mathbf{k}}}
\newcommand{\Gk}{\Gamma^{\veck}}
\begin{document}

\title
{Degenerate band edges in periodic quantum graphs}

\author[Berkolaiko]{Gregory Berkolaiko}
\address{G.B., Department of Mathematics, Texas A\&M University,
College Station, TX 77843-3368, USA}
\email{Gregory.Berkolaiko@math.tamu.edu}

\author[Kha]{Minh Kha}
\address{M.K., Department of Mathematics, The University of Arizona,
Tucson, AZ 85721-0089, USA}
\email{minhkha@math.arizona.edu}

\begin{abstract}
  Edges of bands of continuous spectrum of periodic structures arise
  as maxima and minima of the dispersion relation of their
  Floquet--Bloch transform.  It is often assumed that the extrema
  generating the band edges are non-degenerate.

  This paper constructs a family of examples of $\mathbb{Z}^3$-periodic quantum
  graphs where the non-degeneracy assumption fails: the maximum of the
  first band is achieved along an algebraic curve of co-dimension 2.
  The example is robust with respect to perturbations of edge lengths,
  vertex conditions and edge potentials.  The simple idea behind the
  construction allows generalizations to more complicated graphs and
  lattice dimensions.  The curves along which extrema are achieved
  have a natural interpretation as moduli spaces of planar polygons.
\end{abstract}
\maketitle

\section{Introduction}

Periodic media play a prominent role in many fields including
mathematical physics and material sciences. A classical instance is
the study of crystals, one of the most stable form of all solids that
can be found throughout nature. In a perfectly ordered crystal, the
atoms are placed in a periodic order and this order is responsible for
many properties particular to this material.  On the mathematical level,
the stationary Schr\"odinger operator $-\Delta+V$ with a periodic
potential $V$ is used to describe the one-electron model of solid
state physics \cite{SSP}; here $V$ represents the field created by the
lattice of ions in the crystal.  The resulting differential operator
with periodic coefficients has been studied intensively in mathematics
and physics literature for almost a century.  A standard technique in
spectral analysis of periodic operators is called the Floquet-Bloch
theory (see e.g., \cites{Kbook, Ksurvey}). This technique is applicable
not only to the above model example of periodic Schr\"odinger
operators on Euclidean space, but also to a wide variety of elliptic
periodic equations on manifolds and branching structures (graphs).
Periodic elliptic operators of mathematical physics as well as their
periodic elliptic counterparts on manifolds and quantum graphs do
share an important feature of their spectra: the so-called
\textit{band-gap structure} (see e.g.,
\cites{BK,KOS,Kbook,Ksurvey}). Namely, the spectrum of a periodic
elliptic operator can be represented in a natural way as the union of
finite closed intervals, called \textit{spectral bands}, and sometimes
they may leave open intervals between them, called \textit{spectral
  gaps}.  An endpoint of a spectral gap is called a \textit{gap edge}.
For each spectral band, there is also a corresponding \textit{band
  function} whose image is exactly that spectral band. The set
consisting of all graphs of band functions is called the
\textit{dispersion relation}.  The analytical and geometrical
properties of dispersion relations encode significant information
about the spectral features of the operator.\footnote{These features
  are also called ``threshold effects'' \cite{BirSus2000} whenever
  they depend only on the infinitesimal structure (e.g., a finite
  number of Taylor coefficients) of the dispersion relation at the
  spectral edges.}  Hence studying structural properties of the
dispersion relation may reveal interesting results for periodic
differential operators.  A well-known and widely believed conjecture
in physics literature says that generically (with respect to
perturbations of the coefficients of the operator) the extrema are
attained by a single band of the dispersion relation, are isolated,
and have non-degenerate Hessian.  The non-degeneracy of extrema at the
edges of the spectrum is often assumed to establish many important
results such as finding asymptotics of Green's functions of a periodic
elliptic operator near and at its gap edge \cites{Kha, KKR, KucRai},
homogenization \cites{Bir, BirSus2000, BirSus2004}, or counting
dimensions of spaces of solutions with polynomial growth \cites{KP1,
  KP2}, just to name a few.

In the continuous situation, the generic simplicity of spectral gap
edges was obtained in \cite{KR}. The well-known result in \cite{KS}
established the validity of the full conjecture for the bottom of the
spectrum of a periodic Schr\"{o}dinger operator in Euclidean spaces,
however the full conjecture still remains unproven for internal edges.
It is worth mentioning that in the two dimensional situation, a
``variable period" version of the non-degeneracy conjecture was found
in \cite{ParSht} and the isolated nature of extrema for a wide class
of $\mathbb{Z}^2$-periodic elliptic operators was recently established
in \cite{FilKach}.  In the discrete graph situation, the statement of
the conjecture fails for periodic Schr\"{o}dinger operators on a
diatomic lattice (see \cite{FilKach}). However, in the example of
\cite{FilKach} there are only 2 free parameters to perturb the
operator with and therefore the degeneracy may be attributed to the
paucity of available perturbations.  To investigate this question
further, \cite{DKS} considered a wider class of $\mathbb{Z}^2$-periodic
discrete graphs and it was found that the set of parameters of vertex
and edge weights for which the dispersion relation of the discrete
Laplace-Beltrami operator has a degenerate extremum is a
semi-algebraic subset of co-dimension $1$ in the space of all
parameters.  These examples show that the non-degeneracy of gap edges
is a delicate issue even in the discrete setting.

In this paper, we propose two examples of periodic \emph{metric} (or
``quantum'') graphs whose Schr\"odinger operator dispersion relation
has a degenerate band edge.  Remarkably, this band edge remains
degenerate under a continuum of perturbations: one may vary edge
lengths, vertex coupling constants and the edge potentials.  Our
examples can be considered quantum-graph versions of the
counterexample in \cite{FilKach}, and they clearly show that the main
reason for the degeneracy is not the small number of perturbation
degrees of freedom, but rather the drastic effect a suitably chosen
rank-1 perturbation has on the topology of the graph.

\section{The main result}

We now introduce the quantum graph of our main theorem and formulate
the result.  The description of principal notions used in the main
theorem, such as quantum graphs, covers and periodicity, and the
Floquet--Bloch transform, are deferred to Sections~\ref{sec:qg_vc},
and \ref{sec:fb} correspondingly.  Expanded versions
of these descriptions are available in several sources, such as
\cites{BK,Ber_incol17,Sunada,Ksurvey}.

We will in fact describe two variants of our graph, $X_1$ and $X_2$;
the main theorem will apply equally to both.  We start by describing
one layer of the graph, which looks like planar hexagonal lattice
shown in Figure \ref{fig:SingleLayer}.  It has vertices of two types,
type $A$ and type $B$ denoted by red filled and blue empty circles
correspondingly. 
The graph $X_1$ will have $\delta$-type conditions at
vertices $A$ and $B$, with real coupling constants $\gamma_A$ and
$\gamma_B$, $\gamma_A \neq \gamma_B$.  The graph $X_2$ will have only
Neumann--Kirchhoff (NK) conditions but the vertices of type $A$ are
decorated by attaching a ``tail'', i.e.\ an edge leading to a vertex
of degree one, shown as a smaller black circle in
Figure~\ref{fig:SingleLayer}(right).  Either version is a
$\mathbb{Z}^2$-periodic graph in $\mathbb{R}^2$ and its period lattice
is generated by the two brown dashed vectors.  The edges of the same
color (parallel edges) are related by $\mathbb{Z}^2$-shifts.  They are
assumed to have the same length and to have the same potential (if
any) placed on them.

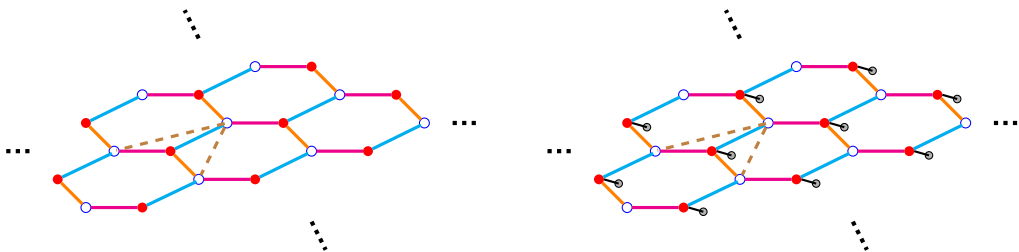
\begin{figure}[H]
  \begin{center}
    \begin{tikzpicture}
[scale = 0.75,
            > = Straight Barb,
phasor/.style = {very thick,-{Stealth}},
angles/.style = {draw, <->, angle eccentricity=1,
                 right, angle radius=7mm}                       ]  
\foreach \i in {0,...,2}{
\draw[draw = blue, fill = blue!0,
                           inner sep = 0pt] (\i/2,\i) circle (2.5 pt);
\fill[red] (\i/2+1,\i) circle (2.5 pt);
\draw[draw = blue, fill = blue!0,
                           inner sep = 0pt] (\i/2+2,\i+0.5) circle (2.5 pt);
\fill[red] (\i/2+3,\i+0.5) circle (2.5 pt);
} 
\foreach \i in {0,...,1}{     
\fill[red] (\i/2-0.5,\i+0.5) circle (2.5 pt);
\draw[draw = blue, fill = blue!0,
                           inner sep = 0pt] (\i/2+4,\i+1) circle (2.5 pt);
\fill[red] (\i/2+5,\i+1) circle (2.5 pt);
}            
\draw[draw = blue, fill = blue!0,
                           inner sep = 0pt] (6,1.5) circle (2.5 pt);
\foreach \i in {0,...,2}{
\draw[very thick, color=magenta] (\i/2+0.088,\i) -- (\i/2+0.912,\i);
\draw[very thick, color=magenta] (\i/2+2.088,\i+0.5) -- (\i/2+2.912,\i+0.5);
}
\foreach \i in {0,...,1}{  
\draw[very thick, color=magenta] (\i/2+4.088,\i+1) -- (\i/2+4.912,\i+1);
}
\foreach \i in {0,...,3}{
\draw[very thick, color=orange] (2*\i - 0.062,\i/2 + 0.062)--(2*\i-0.438,\i/2+0.438);
}
\foreach \i in {0,...,2}{  
\draw[very thick, color=orange] (2*\i+0.438,\i/2+1.062)--(2*\i+0.062,\i/2+1.438);
}
\foreach \i in {0,...,2}{
\draw[very thick, color=cyan] (1.5*\i+0.078,1.539-\i/2)--(1.5*\i+0.938,1.961-\i/2);
\draw[very thick, color=cyan] (1.5*\i+2.078,2.039-\i/2)--(2.938+1.5*\i,2.461-\i/2);
}
\foreach \i in {0,...,1}{  
\draw[very thick, color=cyan] (1.5*\i-0.422,0.539-\i/2)--(1.5*\i+0.461,0.961-\i/2);
}
\draw[ultra thick, dotted] (-1,1)--(-1.5,1);
\draw[ultra thick, dotted] (6.5,1.5)--(7,1.5);
\draw[ultra thick, dotted] (2,3)--(1.75,3.5);
\draw[ultra thick, dotted] (4,-0.25)--(4.25,-0.75);
  \draw [very thick, dashed,color=brown] (2.461,1.49) -- (0.538,1.015);
 \draw [very thick, dashed,color=brown] (2.46,1.421) -- (2.039,0.578);
\end{tikzpicture}
    \hspace{10pt}
    \begin{tikzpicture}
[scale = 0.75,
            > = Straight Barb,
phasor/.style = {very thick,-{Stealth}},
angles/.style = {draw, <->, angle eccentricity=1,
                 right, angle radius=7mm}                       ]  
\foreach \i in {0,...,2}{
\draw[draw = blue, fill = blue!0,
                           inner sep = 0pt] (\i/2,\i) circle (2.5 pt);
\fill[red] (\i/2+1,\i) circle (2.5 pt);
\draw[draw = blue, fill = blue!0,
                           inner sep = 0pt] (\i/2+2,\i+0.5) circle (2.5 pt);
\fill[red] (\i/2+3,\i+0.5) circle (2.5 pt);
} 
\foreach \i in {0,...,1}{     
\fill[red] (\i/2-0.5,\i+0.5) circle (2.5 pt);
\draw[draw = blue, fill = blue!0,
                           inner sep = 0pt] (\i/2+4,\i+1) circle (2.5 pt);
\fill[red] (\i/2+5,\i+1) circle (2.5 pt);
}            
\draw[draw = blue, fill = blue!0,
                           inner sep = 0pt] (6,1.5) circle (2.5 pt);
\foreach \i in {0,...,2}{
\draw[very thick, color=magenta] (\i/2+0.088,\i) -- (\i/2+0.912,\i);
\draw[very thick, color=magenta] (\i/2+2.088,\i+0.5) -- (\i/2+2.912,\i+0.5);
}
\foreach \i in {0,...,1}{  
\draw[very thick, color=magenta] (\i/2+4.088,\i+1) -- (\i/2+4.912,\i+1);
}
\foreach \i in {0,...,3}{
\draw[very thick, color=orange] (2*\i - 0.062,\i/2 + 0.062)--(2*\i-0.438,\i/2+0.438);
}
\foreach \i in {0,...,2}{  
\draw[very thick, color=orange] (2*\i+0.438,\i/2+1.062)--(2*\i+0.062,\i/2+1.438);
}
\foreach \i in {0,...,2}{
\draw[very thick, color=cyan] (1.5*\i+0.078,1.539-\i/2)--(1.5*\i+0.938,1.961-\i/2);
\draw[very thick, color=cyan] (1.5*\i+2.078,2.039-\i/2)--(2.938+1.5*\i,2.461-\i/2);
}
\foreach \i in {0,...,1}{  
\draw[very thick, color=cyan] (1.5*\i-0.422,0.539-\i/2)--(1.5*\i+0.461,0.961-\i/2);
}
\foreach \i in {0,...,2}{ 
\node [circle,draw=black,fill=black!30,inner sep=1pt,minimum size=2pt] at (\i/2+1.35,\i-0.075) {};
\draw[thick] (\i/2+1.088,\i)--(\i/2+1.35,\i-0.075); 
\node [circle,draw=black,fill=black!30,inner sep=1pt,minimum size=2pt] at (\i/2+3.35,\i+0.425) {};
\draw[thick] (\i/2+3.088,\i+0.5)--(\i/2+3.35,\i+0.425);
} 
\foreach \i in {0,...,1}{     
\node [circle,draw=black,fill=black!30,inner sep=1pt,minimum size=2pt] at (\i/2-0.15,\i+0.425) {};
\draw[thick] (\i/2-0.412,\i+0.5)--(\i/2-0.15,\i+0.425);
\node [circle,draw=black,fill=black!30,inner sep=1pt,minimum size=2pt] at (\i/2+5.35,\i+0.925) {};
\draw[thick] (\i/2+5.088,\i+1)--(\i/2+5.35,\i+0.925);
}
\draw[ultra thick, dotted] (-1,1)--(-1.5,1);
\draw[ultra thick, dotted] (6.5,1.5)--(7,1.5);
\draw[ultra thick, dotted] (2,3)--(1.75,3.5);
\draw[ultra thick, dotted] (4,-0.25)--(4.25,-0.75);
  \draw [very thick, dashed,color=brown] (2.461,1.49) -- (0.538,1.015);
 \draw [very thick, dashed,color=brown] (2.46,1.421) -- (2.039,0.578);
\end{tikzpicture}
  \end{center}
  \caption{Two layers of graphs $X_1$ (left) and $X_2$ (right) respectively. These layers are $\mathbb{Z}^2$-periodic with respect to the
    Bravais lattice generated by the two brown dashed. The only difference between these two layers is the extra black tails added in the right layer.}
  \label{fig:SingleLayer}
\end{figure}

\begin{figure}
  \begin{center}
    \begin{tikzpicture}
\foreach \i in {0,...,2}{
\draw[draw = blue, fill = blue!0,
                           inner sep = 0pt] (\i/2,\i) circle (2.5 pt);
\fill[red] (\i/2+1,\i) circle (2.5 pt);
\draw[draw = blue, fill = blue!0,
                           inner sep = 0pt] (\i/2+2,\i+0.5) circle (2.5 pt);
\fill[red] (\i/2+3,\i+0.5) circle (2.5 pt);
} 
\foreach \i in {0,...,1}{     
\fill[red] (\i/2-0.5,\i+0.5) circle (2.5 pt);
\draw[draw = blue, fill = blue!0,
                           inner sep = 0pt] (\i/2+4,\i+1) circle (2.5 pt);
\fill[red] (\i/2+5,\i+1) circle (2.5 pt);
}            
\draw[draw = blue, fill = blue!0,
                           inner sep = 0pt] (6,1.5) circle (2.5 pt);

\foreach \i in {0,...,2}{
\draw[draw = blue, fill = blue!0,
                           inner sep = 0pt] (\i/2+.5,\i-3) circle (2.5 pt);
\fill[red] (\i/2+1.5,\i-3) circle (2.5 pt);
\draw[draw = blue, fill = blue!0,
                           inner sep = 0pt] (\i/2+2.5,\i-2.5) circle (2.5 pt);
\fill[red] (\i/2+3.5,\i-2.5) circle (2.5 pt);
} 
\foreach \i in {0,...,1}{     
\draw[draw = blue, fill = blue!0,
                           inner sep = 0pt] (\i/2-1,\i-2.5) circle (2.5 pt);
\fill[red] (\i/2,\i-2.5) circle (2.5 pt);  
\draw[draw = blue, fill = blue!0,
                           inner sep = 0pt] (\i/2+4.5,\i-2) circle (2.5 pt); 
\fill[red] (\i/2+5.5,\i-2) circle (2.5 pt);
}            
\node[circle,minimum width=4pt,inner sep=0pt]  (6.5,-1.5) at  (6.5,-1.5) {};  
\draw[draw = blue, fill = blue!0,
                           inner sep = 0pt] (6.5,-1.5) circle (2.5 pt);
\fill[red] (-1.5,-2) circle (2.5 pt);
\foreach \i in {0,...,2}{
\draw[draw = blue, fill = blue!0,
                           inner sep = 0pt] (\i/2+1,\i-6) circle (2.5 pt);
\fill[red] (\i/2+2,\i-6) circle (2.5 pt);
\draw[draw = blue, fill = blue!0,
                           inner sep = 0pt] (\i/2+3,\i-5.5) circle (2.5 pt);
\fill[red] (\i/2+4,\i-5.5) circle (2.5 pt);
} 
\foreach \i in {0,...,1}{ 
\draw[draw = blue, fill = blue!0,
                           inner sep = 0pt] (\i/2-0.5,\i-5.5) circle (2.5 pt);     
\fill[red] (\i/2+0.5,\i-5.5) circle (2.5 pt);  
\draw[draw = blue, fill = blue!0,
                           inner sep = 0pt] (\i/2+5,\i-5) circle (2.5 pt); 
\fill[red] (\i/2+6,\i-5) circle (2.5 pt);
}            
\draw[draw = blue, fill = blue!0,
                           inner sep = 0pt] (7,-4.5) circle (2.5 pt);
\fill[red] (-1,-5) circle (2.5 pt);
\foreach \i in {0,...,2}{
\draw[very thick, color=magenta] (\i/2+0.088,\i) -- (\i/2+0.912,\i);
\draw[very thick, color=magenta] (\i/2+2.088,\i+0.5) -- (\i/2+2.912,\i+0.5);
}
\foreach \i in {0,...,1}{  
\draw[very thick, color=magenta] (\i/2+4.088,\i+1) -- (\i/2+4.912,\i+1);
}
\foreach \i in {0,...,2}{
\draw[very thick, color=magenta] (\i/2+.588,\i-3) -- (\i/2+1.412,\i-3);
\draw[very thick, color=magenta] (\i/2+2.588,\i-2.5) -- (\i/2+3.412,\i-2.5);
}
\foreach \i in {0,...,1}{  
\draw[very thick, color=magenta] (\i/2-0.912,\i-2.5) -- (\i/2-0.088,\i-2.5);
\draw[very thick, color=magenta] (\i/2+4.588,\i-2) -- (\i/2+5.412,\i-2);
}
\foreach \i in {0,...,2}{
\draw[very thick, color=magenta] (\i/2+1.088,\i-6) -- (\i/2+1.912,\i-6);
\draw[very thick, color=magenta] (\i/2+3.088,\i-5.5) -- (\i/2+3.912,\i-5.5);
}
\foreach \i in {0,...,1}{  
\draw[very thick, color=magenta] (\i/2-0.412,\i-5.5) -- (\i/2+0.412,\i-5.5);
\draw[very thick, color=magenta] (\i/2+5.088,\i-5) -- (\i/2+5.912,\i-5);
}
\foreach \i in {0,...,3}{
\draw[very thick, color=orange] (2*\i - 0.062,\i/2 + 0.062)--(2*\i-0.438,\i/2+0.438);
\draw[very thick, color=orange] (2*\i+0.438,\i/2-2.938)--(2*\i+0.062,\i/2-2.562);
\draw[very thick, color=orange] (2*\i+.938,\i/2-5.938)--(2*\i+0.562,\i/2-5.562);
}
\foreach \i in {-1,...,2}{
\draw[very thick, color=orange] (2*\i+1-.062,\i/2-2+.062)--(2*\i+0.5+.062,\i/2-1.5-0.062);
}
\foreach \i in {-1,...,2}{
\draw[very thick, color=orange] (2*\i+1.5-.062,\i/2-5+.062)--(2*\i+1+.062,\i/2-4.5-.062);
}
\foreach \i in {0,...,2}{  
\draw[very thick, color=orange] (2*\i+0.5-0.062,\i/2+1+0.062)--(2*\i+0.062,\i/2+1.5-0.062);
\draw[very thick, color=orange] (2*\i+1.5-0.062,\i/2-5+0.062)--(2*\i+1+0.062,\i/2-4.5-0.062);
}
\foreach \i in {0,...,2}{
\draw[very thick, color=cyan] (1.5*\i+.078,1.5-\i/2+.039)--(1.5*\i+1-.078,2-\i/2-.039);
\draw[very thick, color=cyan] (1.5*\i+2+.078,2-\i/2+.039)--(3+1.5*\i-.078,2.5-\i/2-.039);
\draw[very thick, color=cyan] (1.5*\i+0.5+.078,-1.5-\i/2+.039)--(1.5*\i+1.5-.078, -1-\i/2-.039);
\draw[very thick, color=cyan] (1.5*\i+2.5+.078,-1-\i/2+.039)--(3.5+1.5*\i-.078,-0.5-\i/2-.039);
\draw[very thick, color=cyan] (1.5*\i+1+.078,-4.5-\i/2+.039)--(1.5*\i+2-.078, -4-\i/2-.039);
\draw[very thick, color=cyan] (1.5*\i+3+.078,-4-\i/2+.039)--(4+1.5*\i-.078,-3.5-\i/2-.039);
}
\foreach \i in {0,...,1}{  
\draw[very thick, color=cyan] (1.5*\i-0.5+0.078,0.5-\i/2+0.039)--(1.5*\i+0.5-.078,1-\i/2-.039);
}
\foreach \i in {-1,...,1}{
\draw[very thick, color=cyan] (1.5*\i+.078,-2.5-\i/2+0.039)--(1.5*\i+1-0.078,-2-\i/2-.039);
}
\foreach \i in {-1,...,1}{
\draw[very thick, color=cyan] (1.5*\i+0.5+.078,-5.5-\i/2+0.039)--(1.5*\i+1.5-0.078,-5-\i/2-.039);
}
\foreach \i in {0,...,2}{
\draw[thin, color=green] (2*\i+1-.014,\i/2-.086)--(2*\i+0.5+.014,\i/2-3+.086);
\draw[thin, color=green] (2*\i-0.5-.014,\i/2+0.5-.086)--(2*\i-1+.014,\i/2-2.5+.086);
\draw[thin, color=green] (5.5-.014,2-.086)--(5+.014,-1+.086);
\draw[thin, color=green] (2*\i-.014,\i/2+1.5-.086)--(2*\i-0.5+.014,\i/2-1.5+.086);
}
\foreach \i in {0,...,2}{
\draw[thin, color=green] (\i/2+1+.014,\i-6+.086)--(\i/2+1.5-.014,\i-3-.086);
\draw[thin, color=green]  (\i/2+3+.014,\i-5.5+.086)-- (\i/2+3.5-.014,\i-2.5-.086);
}
\foreach \i in {0,1}{
\draw[thin, color=green] (\i/2+5+.014,\i-5+.086) -- (\i/2+5.5-.014,\i-2-.086);
\draw[thin, color=green] (\i/2-0.5+.014,\i-5.5+.086) -- (\i/2-.014,\i-2.5-.086);
}
\draw[thin, color=green] (-1.5-.014,-2-.086) -- (-1.5625+.014,-2.375+.086);
\foreach \i in {0,...,2}{
\draw[thin, color=green]  (\i/2+2-.014,\i-6-.086) -- (\i/2+1.9375,\i-6.375);
\draw[thin, color=green] (\i/2+4-.014,\i-5.5-.086) -- (\i/2+3.9375,\i-5.875);
} 
\foreach \i in {0,...,1}{     
\draw[thin,color=green] (\i/2+0.5-.014,\i-5.5-.086) -- (\i/2+0.4375,\i-5.875);
\draw[thin, color=green] (\i/2+6-.014,\i-5-.086) -- (\i/2+5.9375,\i-5.375);
}            
\draw[thin,color=green] (-1-.014,-5-.086) -- (-1.0625,-5.375);
\draw[ultra thick, dotted] (-1,1)--(-1.5,1);
\draw[ultra thick, dotted] (6.5,1.5)--(7,1.5);
\draw[ultra thick, dotted] (-2,-2)--(-2.5,-2);
\draw[ultra thick, dotted] (7,-1.5)--(7.5,-1.5);
\draw[ultra thick, dotted] (-1.6125,-3)--(-1.675,-3.375);
\draw[ultra thick, dotted] (-1.5,-5)--(-2,-5);
\draw[ultra thick, dotted] (7.5,-4.5)--(8,-4.5);
\draw[ultra thick, dotted] (1.5,3.25)--(1.375,2.5); 
\draw[ultra thick, dotted] (5,3.25)--(4.875,2.5); 
\draw[ultra thick, dotted] (6.1,-5.75) -- (5.975,-6.5); 
\draw[ultra thick, dotted] (3,-6.15)--(2.875,-6.9); 
\draw[ultra thick, dotted] (-1.1,-6)--(-1.225,-6.75); 
\end{tikzpicture}
  \end{center}
  \caption{The graph $X_1$ is generated by stacking together infinitely
    many copies of the layer graph along the height axis. A layer is
    connected to the next layer by certain green edges. To get the graph $X_2$, one just simply adds black tails at the red filled vertices of $X_1$.}
  \label{fig:MultiLayer}
\end{figure}
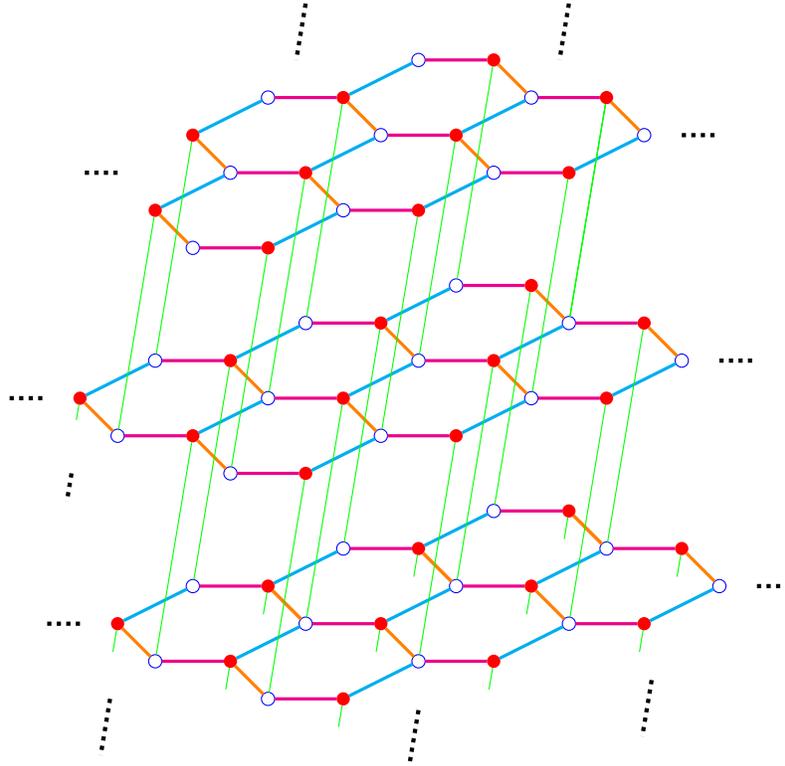

The $\mathbb{Z}^3$-periodic graphs $X_1$ and $X_2$ are obtained by
stacking the corresponding layers infinitely many times in both
directions of the height axis, see Figure~\ref{fig:MultiLayer}.  The
layers are connected in a periodic fashion by edges (shown in green)
between vertices of type B in a lower level and vertices of type A in
the upper level.  Roughly speaking, one may think of the result as an
infinite sheeted cover of the layers in Figure~\ref{fig:SingleLayer}.
In particular, $X_1$ is a $3$-dimensional topological diamond lattice,
see \cite{Sunada}.  In Figure~\ref{fig:FundDomain} we sketch a choice
of the fundamental domain of the graph $X_1$ with respect to the
$\mathbb{Z}^3$-periodic lattice.

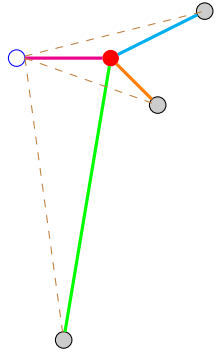
\begin{figure}[h]
  \begin{center}
    \begin{tikzpicture}
[scale=1.25,
            > = Straight Barb,
phasor/.style = {very thick,-{Stealth}},
angles/.style = {draw, <->, angle eccentricity=1,
                 right, angle radius=7mm}                       ]  
\foreach \i in {1}{
\fill[red] (\i/2+1,\i) circle (2.5 pt);
}         
\draw[draw = blue, fill = blue!0,
                           inner sep = 0pt] (0.5,1) circle(2.5 pt);
\draw[draw = black, fill = black!20,
                           inner sep = 0pt] (2,0.5) circle(2.5 pt);
\draw[draw = black, fill = black!20,
                           inner sep = 0pt] (2.5,1.5) circle(2.5 pt);

\draw[draw = black, fill = black!20,
                           inner sep = 0pt] (1,-2) circle(2.5 pt);
                           
\foreach \i in {1}{
\draw[very thick, color=magenta] (\i/2+0.088,\i) -- (\i/2+0.912,\i);
}
\foreach \i in {1}{
\draw[very thick, color=orange] (2*\i - 0.062,\i/2 + 0.062)--(2*\i-0.438,\i/2+0.438);
}
\foreach \i in {1}{
\draw[very thick, color=cyan] (1.5*\i+.078,1.5-\i/2+.039)--(1.5*\i+1-.078,2-\i/2-.039);
}
\foreach \i in {1}{
\draw[very thick, color=green] (2*\i-0.5-.014,\i/2+0.5-.086)--(2*\i-1+.014,\i/2-2.5+.086);
}

 \draw [thin, dashed,color=brown] (0.5 + .085,1+.021) -- (2.5,1.5);
 \draw [thin, dashed,color=brown] (0.5+.088,1) -- (2,0.5);
 \draw [thin, dashed,color=brown] (0.5+.088,1) -- (1,-2);
\end{tikzpicture}
  \end{center}
  \caption{A fundamental domain for the graph $X_1$. Here the three gray vertices are not included in the fundamental domain. The graph $X_1$ can be obtained by shifting this fundamental domain along the three dashed directions, which are its periods.}
  \label{fig:FundDomain}
\end{figure}

The graphs $X_1$ and $X_2$ we defined above are actually the
\textit{maximal abelian covers} of finite graphs (see e.g., \cites{Baez, Sunada} for more details on maximal abelian covers of graphs).  Taking the quotient
of $X$ with respect to the periodic lattice we obtain the respective
graphs in Figure~\ref{fig:CompactGraph}.  The graph
$\Gamma_1 = X_1 / \mathbb{Z}^3$ has two vertices, $A$ and $B$, which
are the images of the vertices of type $A$ and $B$ in $X_1$ under the
canonical covering map from $X_1$ to $\Gamma_1$.  The four edges of
$\Gamma_1$ are the images of the sets of parallel edges in $X_1$.  The
graph $\Gamma_2 = X_2 / \mathbb{Z}^3$ has three vertices and five
edges.  For either graph $\Gamma$, the first integral homology group
is $H_1(\Gamma,\mathbb{Z})\cong\mathbb{Z}^3$.  We will be using
notation $X$ when a statement applies equally to both $X_1$ and $X_2$;
similarly we use $\Gamma$ to refer to both graphs $\Gamma_1$ and
$\Gamma_2$.

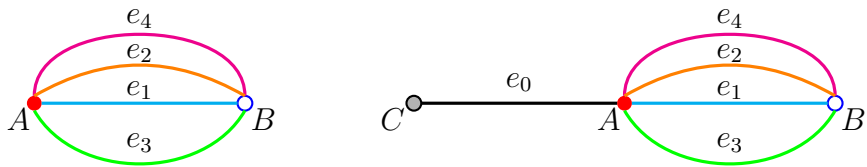
\begin{figure}[h]
  \begin{center}
    \begin{tikzpicture}[thick,scale=0.7]
\coordinate (A) at (4,0);
\coordinate (B) at (8,0);
\fill[red] (A) circle (4pt);
\draw[draw = blue, fill = blue!0,
                           inner sep = 0pt]  (B) circle (4pt);
\draw (4,0.14) [very thick, color=magenta] to[out=90,in=90] (8,0.14);
\node at (3.7,-0.3) {$A$};
\node at (8.35,-0.3) {$B$};
\node at (6,1.6)  {$e_4$};
\draw (4,0.14) [very thick, color=orange] to[out=30,in=150] (8,0.14);
\node at (6,0.95)  {$e_2$};
\draw (4.14,0) [very thick, color=cyan] --  (8-0.14,0);
\node at (6,0.25)  {$e_1$};
\draw (4,-0.14) [very thick, color=green] to[out=-60,in=-120] (8,-0.14);
\node at (6,-.75)  {$e_3$};
\end{tikzpicture}
    \hspace{20pt}
     \begin{tikzpicture}[thick,scale=0.7]
\coordinate (A) at (0,0);
\coordinate (B) at (4,0);
\coordinate (C) at (8,0);
\draw[draw = black, fill = black!30,
                           inner sep = 0pt] (A) circle (4pt);
\fill[red] (B) circle (4pt);
\draw[draw = blue, fill = blue!0,
                           inner sep = 0pt]  (C) circle (4pt);
\draw (0.14,0) [very thick]  -- (4-.14,0) node[midway,above] {$e_0$};
\draw (4,0.14) [very thick, color=magenta] to[out=90,in=90] (8,0.14);
\node at (3.7,-0.3) {$A$};
\node at (-0.4,-0.3) {$C$};
\node at (8.35,-0.3) {$B$};
\node at (6,1.6)  {$e_4$};
\draw (4,0.14) [very thick, color=orange] to[out=30,in=150] (8,0.14);
\node at (6,0.95)  {$e_2$};
\draw (4.14,0) [very thick, color=cyan] --  (8-0.14,0);
\node at (6,.25)  {$e_1$};
\draw (4,-0.14) [very thick, color=green] to[out=-60,in=-120] (8,-0.14);
\node at (6,-.75)  {$e_3$};
\end{tikzpicture}
    \caption{The graph
      $\Gamma_1$ (left) and the graph $\Gamma_2$ (right). In both graphs, the vertices $A$, $B$ correspond to red-filled and blue-empty type vertices in $X$, while the vertex $C$ corresponds to the decorated vertices in the black tails in $X_2$. Here $e_0$ is the line $CA$ and $e_j$,
      $1 \leq j \leq 4$ are the corresponding edges between the two
      vertices $A$ and $B$.}
    \label{fig:CompactGraph}
  \end{center}
\end{figure}

The graphs $X$ are metric graphs: each edge $e$ in $X$ is
identified with the interval $[0,\ell(e)]$, where $\ell(e)$ is the
length of the edge $e$.  The lengths of edges related by a periodic
shift (i.e.\ belonging to the same $\mathbb{Z}^3$-equivalence class or
having the same color) are the same.  We denote by $\ell_j$,
$j\in\{1,\ldots,4\}$ the distinct lengths of edges in the graph $X_1$;
the graph $X_2$ has an additional length --- the length of the tail
--- which we denote by $\ell_0$.  This metric information on $X$ can
be viewed as a pull-back of the metric on $\Gamma$ via the covering
map $\pi: X \rightarrow \Gamma$.  Notice that unlike the periodic
realization of graphene and its multi-layer variants, we do not assume
that each hexagon in the layer graph is regular, i.e.\ the lengths of
edges with distinct colors may be different.

On the edges of the graph $X$ we consider the Laplacian
$-\Delta_X=-\frac{d^2}{dx^2}$ or, more generally, the Schr\"odinger
operator $-\Delta_X + q_e(x)$ with piecewise continuous potential
$q_e(x)$.  The potential is assumed to be the same on the edges of the
same equivalence class (color), taking into account the edge's
orientation.  This ensures the potential is $\mathbb{Z}^3$-periodic
like the rest of the graph; we do not impose any other symmetry
conditions on $q_e$.  Regularity of the potential also plays no role
in our examples, the same results can be extended to $L^1$ potentials
with minor modifications.

At every vertex of the graph $X_2$, we impose the standard
Neumann-Kirchhoff boundary condition; we impose $\delta$-type
conditions with distinct coupling constants $\gamma_A$ and $\gamma_B$ (one of
them may be zero) on the corresponding vertices of the graph $X_1$.
For the precise definition of vertex conditions, the reader is
referred to Section~\ref{sec:qg_vc}.  The graphs $X$ are non-compact,
$\mathbb{Z}^3$-periodic quantum graphs.  According to the
Floquet-Bloch theory, the spectrum of the operator $-\Delta_X$ is the
union of the ranges of the band functions $\lambda_j = \lambda_j(k)$,
$j \geq 1$, where the \emph{quasimomentum} $\veck$ ranges over the
torus
$\mathbb{T}^3 :=
\left(\mathbb{R}/2\pi\mathbb{Z}\right)^3=(-\pi,\pi]^3$ and
\begin{equation}
  \label{eq:ev_numbering}
  \lambda_1(\veck) \leq \lambda_2(\veck) \leq \cdots
  \qquad\mbox{for any } \veck\in \mathbb{T}^3.
\end{equation}
Now we state our main result.

\begin{thm}
  \label{main}
  \begin{enumerate}
  \item[(a)] The spectrum of the operator $-\Delta_X$ has an open gap
    between the first and the second band functions, i.e.
    \begin{equation*}
      \max_\veck \lambda_1(\veck)<\min_\veck \lambda_2(\veck).
    \end{equation*}
  \item[(b)] If the lengths $\ell_j$ ($1 \leq j\leq 4$) are
    approximately equal, then there exists a non-trivial
    one-dimensional algebraic curve $\mu$ in $\mathbb{T}^3$ such that
    $\lambda_1$ attains its maximum value on $\mu$.  Consequently,
    there exists a \textbf{degenerate band edge} in the spectrum of
    $-\Delta_X$.
  \item[(c)] The degenerate band edge in the spectrum is persistent
    under a small perturbation of edge lengths, vertex
    coupling constants or edge potentials.
  \end{enumerate}
\end{thm}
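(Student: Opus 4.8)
The plan is to collapse the Floquet--Bloch problem to a single scalar dispersion relation in which the quasimomentum enters only through the squared length of a sum of planar vectors, and then to recognize the band-edge set as the configuration space of a four-bar linkage. First I would apply the Floquet--Bloch transform, so that for each $\veck\in\mathbb{T}^3$ the operator becomes a Schr\"odinger operator on the compact quotient $\Gamma$ with $\veck$-twisted matching at the vertices. Because both $A$ and $B$ are incident to all four edges $e_1,\dots,e_4$, continuity collapses the boundary data to two unknowns $u_A,u_B$ (for $X_2$ the tail $e_0$ is eliminated first by solving its Neumann problem, which only shifts the effective coupling at $A$ by a $\lambda$-dependent Robin term). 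Recording the Dirichlet-to-Neumann data of each edge through transfer coefficients $c_j,s_j,s_j'$ (functions of $\lambda$ alone) and imposing the $\delta$/NK conditions gives a $2\times2$ homogeneous system for $(u_A,u_B)$, whose vanishing determinant is the dispersion relation. I expect it to take the form
\[
\Phi(\lambda) = \Big|\,w_1(\lambda) + w_2(\lambda)e^{ik_1} + w_3(\lambda)e^{ik_2} + w_4(\lambda)e^{ik_3}\,\Big|^2,
\]
with $w_j=1/s_j(\lambda)$ and $\Phi(\lambda)=(\sum_j c_j/s_j+\gamma_A)(\sum_j s_j'/s_j+\gamma_B)$ both independent of $\veck$, the entire quasimomentum dependence being confined to the right-hand side $R(\veck,\lambda):=|\cdots|^2\ge0$.

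Next I would show that extrema of the band function are governed by extrema of $R$. Differentiating $\Phi(\lambda)-R(\veck,\lambda)=0$ in $\veck$ gives $(\Phi'-\partial_\lambda R)\,\partial_{k_i}\lambda_1 = \partial_{k_i}R$, so every critical point of $\lambda_1(\veck)$ projects to a critical point of $\veck\mapsto R(\veck,\lambda)$ at the corresponding energy, and conversely. A sign analysis of $\Phi'$ and $\partial_\lambda R$ across the first band should identify its maximum with the \emph{global minimum} of $R$. Since $R\ge0$ and $R=0$ exactly when the four planar vectors $w_1,\,w_2e^{ik_1},\,w_3e^{ik_2},\,w_4e^{ik_3}$ sum to zero, the maximizing set is precisely the moduli space of closed planar quadrilaterals with side lengths $w_1(\lambda^*),\dots,w_4(\lambda^*)$, i.e.\ a four-bar linkage. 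On this set $R\equiv0$, so the dispersion relation collapses to the $\veck$-independent equation $\Phi(\lambda^*)=0$; hence $\lambda_1\equiv\lambda^*$ along the entire set, giving a degenerate critical manifold.

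For part (b) I would then use the near-equal lengths: when the $\ell_j$ are nearly equal the weights $w_j(\lambda^*)$ are nearly equal and positive, so the strict quadrilateral inequalities $w_i<\sum_{j\ne i}w_j$ hold; consequently the moduli space is nonempty and, generically, a smooth compact one-dimensional algebraic curve $\mu\subset\mathbb{T}^3$ of co-dimension $2$. For part (a), the sign $R\ge0$ forces an absence of spectrum wherever $\Phi(\lambda)<0$; because $\gamma_A\neq\gamma_B$ (or, for $X_2$, the tail shifts only one effective coupling), the two factors of $\Phi$ vanish at distinct energies, so $\Phi$ becomes strictly negative on an interval just above $\lambda^*$, forcing the open gap $\max_\veck\lambda_1<\min_\veck\lambda_2$. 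For persistence (c) I would observe that all the governing conditions are open: strict negativity of $\Phi$ on an energy interval, the strict quadrilateral inequalities at $\lambda^*$, and smoothness and nonemptiness of $\mu$ all survive small perturbations of lengths, coupling constants and potentials, since $w_j(\lambda)$ and $\Phi(\lambda)$ depend continuously on these data.

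The main obstacle I anticipate is the monotonicity step: rigorously matching the maximum of the first band with the minimum value $0$ of $R$, which requires pinning down the signs of $\Phi'$ and $\partial_\lambda R$, confirming that the closed-polygon energy $\lambda^*$ is the top of the first band rather than an interior point or a higher branch, and checking that $R=0$ is actually attained within the first-band energy window (this is exactly where the near-equality of the $\ell_j$, hence of the $w_j$, is used). Once this is in place, the identification of the band-edge set with a four-bar linkage moduli space, and its dimension and smoothness, is essentially the (classical) algebraic geometry of planar polygon spaces.
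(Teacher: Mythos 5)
You take a genuinely different route from the paper. You derive the Floquet secular equation explicitly, $\Phi(\lambda)=R(\veck,\lambda)$ with $R(\veck,\lambda)=\bigl|\sum_{j}w_j(\lambda)e^{ik_j}\bigr|^2$ (where $k_4=0$, $w_j=1/s_j$), and read part (a) off the sign of $\Phi$ and part (b) off the zero set of $R$. The paper never writes the secular equation: it uses surgery tools --- the rank-one interlacing of Theorem~\ref{rank1} applied to the Dirichlet perturbations $\Gamma^A,\Gamma^B$, which are $\veck$-independent by gauge invariance (Lemma~\ref{lem:unitary_equivalence}) --- to trap $\lambda_1(\Gk)\leq\lambda_1(\Gamma^B)<\lambda_1(\Gamma^A)\leq\lambda_2(\Gk)$ for every $\veck$, and then characterizes equality $\lambda_1(\Gk)=\lambda_1(\Gamma^B)$ by condition~\eqref{Qk-condition}. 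Your construction produces exactly the same degeneracy curve, because the derivatives $\varphi_j'(B)$ of the star eigenfunction in \eqref{Qk-condition} are proportional to your weights $1/s_j(\lambda^*)$, and both arguments finish with the same polygon lemma (Lemma~\ref{lemma-quadrangle}). Your version has the merit of exhibiting the dispersion relation explicitly; the paper's is softer, computation-free, and extends verbatim to nonzero edge potentials.

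There is, however, a genuine gap at precisely the step you flagged, and the fix you propose would not close it. Showing that the spectrum avoids the interval where $\Phi<0$ proves only that this interval is a spectral gap; it does not prove that exactly \emph{one} band function lies below it, i.e.\ that the gap separates $\lambda_1$ from $\lambda_2$ and that the eigenvalue created at $\lambda^*$ for $\veck\in\mu$ is $\lambda_1(\veck)$ rather than $\lambda_2(\veck)$. A ``sign analysis of $\partial_\lambda R$'' cannot settle this, because $\partial_\lambda R$ has no definite sign across the band: for $\veck\in\mu$ the function $R(\veck,\cdot)$ attains its minimum value $0$ at $\lambda^*$, so it is not increasing on the band window, whereas at $\veck=\mathbf{0}$ it is strictly increasing. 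Two repairs are available. (i) Import the paper's interlacing: for each fixed $\veck$, imposing Dirichlet at $B$ restricts the form domain of $\Gk$ by codimension one, so min--max gives $\lambda_1(\Gk)\leq\lambda_1(\Gamma^B)\leq\lambda_2(\Gk)$, which together with your gap yields $\min_\veck\lambda_2\geq\lambda_1(\Gamma^A)$. (ii) Stay inside your framework and count at one point: writing $F=\sum_j c_j/s_j$, so that $\Phi=(F+\gamma_A)(F+\gamma_B)$, on $(-\infty,\lambda^*]$ one has $F>-\gamma_A$, hence $2F+\gamma_A+\gamma_B>\gamma_B-\gamma_A>0$ and $F'<0$, so $\Phi$ is strictly decreasing there, while $R(\mathbf{0},\cdot)=\bigl(\sum_j w_j\bigr)^2$ is strictly increasing (each $w_j$ is positive and increasing below the first edge-Dirichlet eigenvalue); thus $\Phi-R(\mathbf{0},\cdot)$ has exactly one zero below $\lambda^*$, so $\lambda_2(\mathbf{0})$ lies above the gap, and continuity of $\lambda_2(\cdot)$ plus connectedness of $\mathbb{T}^3$ pushes this to all $\veck$. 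With this counting in place, your identification of $\max_\veck\lambda_1=\lambda^*$, attained exactly on $\mu$, and your openness argument for part (c) go through; you should also record that energies with $s_j(\lambda)=0$, where the Dirichlet-to-Neumann reduction degenerates, lie above the window in question, and that for the graph $X_2$ the same monotonicity holds for the factor containing the tail's effective Robin term.
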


Theorem~\ref{main} will be proved in Section~\ref{sec:main_proof}
after reviewing relevant definitions and tools in
Section~\ref{prelim}.  It will become clear during the proof that the
phenomenon described in the Theorem is very robust.  Informally
speaking, the extremum responsible for a band edge is frequently
degenerate for any graph where removing a single vertex (but not the edges
incident to it) reduces the rank of the fundamental group by 3 or
more.  In particular, the condition on the edge lengths in part (b) of
the Theorem serves only to insure a degenerate band edge particularly
for the first band.  For almost all choices of edge lengths one can
show that a \emph{finite proportion} of bands will have degenerate
edges.\footnote{This is a consequence of Barra--Gaspard ergodicity of
  quantum graphs: informally, what happens once for one choice of
  lengths will happen with finite frequency for almost all choices of
  lengths.  For more precise statements, see
  \cites{BarGas_jsp00,BerWin_tams10,BanBer_prl13,ExnTur_jpa17}}

The decorations introduced at vertices $A$ and $B$ ($\delta$-type
conditions in $X_1$ and the tail edge in $X_2$) serve to break
symmetry in the periodic graph and thus create a band gap.  If the
symmetry is not broken, one would expect the bands to touch along the
curve $\mu$; see \cite{BerCom_jst18} for related results.  Finally, the
topology of the degeneracy submanifold $\mu$ may be non-trivial in
the higher-dimensional analogues of our example.  We touch upon it in
in Section~\ref{sec:discussion}.

\section{Some preliminaries and notations}
\label{prelim}

\subsection{Quantum graphs and vertex conditions}
\label{sec:qg_vc}

In this section we recall some notations and basic notions of quantum
graphs; for more details the reader is encouraged to consult
\cite{BK,Mugnolo_book}.  Consider a graph $\mathcal{G}=(\mathcal{V},\mathcal{E})$
where $\mathcal{V}$ and $\mathcal{E}$ are the sets of vertices and
edges of $\mathcal{G}$, respectively.  For each vertex
$v\in\mathcal{V}$, let $\mathcal{E}_v$ be the set of edges $e$
incident to the vertex $v$.  The degree $d_v$ of the vertex $v$ is the
cardinality of the set $\mathcal{E}_v$.  The graph $\mathcal{G}$ is a
\emph{metric graph} if each edge $e$ of the graph is give a length,
$\ell_e$ and can thus be identified with the interval $[0,\ell_e]$.  A
function $f$ on the graph $\mathcal{G}$ is henceforth a collection of
functions $\{f_e\}_{e\in\mathcal{E}}$, each defined on the
corresponding interval.

Let us denote by $L^2(\mathcal{G})$ (correspondingly
$H^2(\mathcal{G})$) the space of functions on the graph $\mathcal{G}$
such that on each edge $e$ in $\mathcal{E}$, $f_e$ belongs to $L^2(e)$
(corresp.\ $H^2(e)$) and, moreover,
\begin{equation*}
  \sum_{e \in \mathcal{E}} \|f\|^2_{L^2(e)}<\infty
  \qquad \left(
    \mbox{corresp.\ }
    \sum_{e \in \mathcal{E}} \|f\|^2_{H^2(e)}<\infty
  \right).
\end{equation*}
$\mathcal{G}$ is called a \textit{quantum graph} if it is a metric
graph equipped with a self-adjoint differential operator $\mathcal{H}$
of the Schr\"odinger type acting in $L^2(\mathcal{G})$.  We will take
$\mathcal{H}$ to act as $-\Delta_{\mathcal{G}}+q_e(x)$ on the edge
$e$, where $q_e$ are assumed to be piecewise continuous.  The domain
of the operator will be the Sobolev space $H^2(\mathcal{G})$ further
restricted by a set of \emph{vertex conditions} which involve the
values of $f_e(v)$ and the derivatives $\frac{df_e}{dx}(v)$ calculated
at the vertices.  We list some commonly used vertex conditions below.

\begin{itemize}
\item \textit{Dirichlet} condition at a vertex $v \in \mathcal{V}$
  requires that the function $f$ vanishes at the vertex,
  \begin{equation*}
    f(v) = 0.
  \end{equation*}
  This is an example of a decoupling condition. Namely, if the Dirichlet
  condition is imposed at a vertex of degree $d>1$, it is equivalent to
  disconnecting the edges incident to the vertex and imposing Dirichlet
  conditions at the resulting $d$ vertices of degree 1.
\item \emph{$\delta$-type} condition at a vertex $v\in\mathcal{V}$
  requires the function to be continuous at $v$ in addition to the condition
  \begin{equation}
    \label{eq:delta-type}
    \sum_{e \in \mathcal{E}_v}\frac{df_e}{dx}(v) = \gamma_v f(v),
    \qquad
    \gamma_v\in\mathbb{R},
  \end{equation}
  where $\frac{df_e}{dx}(v)$ is the derivative of the function $f_e$
  taken in the direction into the edge.  We note that the value $f(v)$
  is well-defined because of the assumed continuity.  The real
  parameter $\gamma_v$ is called the \emph{vertex coupling constant}.
  The special case of the $\delta$-type condition with $\gamma_v=0$ is
  the \textit{Neumann-Kirchhoff} (NK) or ``standard'' condition.  The
  Dirichlet condition defined above can be naturally interpreted as
  $\gamma_v=+\infty$.
\item \emph{quasi-NK} or \emph{magnetic} condition at a vertex
  $v\in\mathcal{V}$: Assume that the degree of the vertex $v$ is
  $d_v$, $\mathcal{E}_v=\{1, \ldots, d_v\}$ and we are given
  $d_v$ \textit{unit complex scalar}s
  $z_1, \ldots, z_{d_v} \in \mathbb{S}^1$. We impose the following two
  conditions:
  \begin{equation}
    \label{quasi-NK}
    \begin{cases}
      &z_1 f_1(v) = z_2 f_2(v) = \ldots = z_{d_v} f_{d_v}(v) \\
      &\sum_{j=1}^{d_v}z_j \frac{df_j}{dx}(v) = 0,
    \end{cases}
  \end{equation}
  Of course, the NK condition is a special case of~\eqref{quasi-NK}
  when all $z_j$ are equal.
\end{itemize}
If every vertex of the graph $\mathcal{G}$ is equipped with one of the
above conditions, the operator $\mathcal{H}$ is self-adjoint (see
\cite{BK}*{Theorem 1.4.4} and references therein).  The last set of
conditions allow one to introduce magnetic field on the graph without
modifying the operator (see \cite{KosSch_cmp03} and also
\cite{Mugnolo_book,Kur_lmp19} for more recent appearances).  They also
arise as a result of Floquet--Bloch reduction reviewed in the next
section.

\subsection{Floquet-Bloch reduction}
\label{sec:fb}

Let us now return to our periodic graph $X$. Recall that the
$\delta$-type conditions are imposed at all vertices of $X$ and hence
the operator $-\Delta_X$ is self-adjoint. A standard Floquet-Bloch
reduction (see e.g., \cites{BK, Kbook, Ksurvey}) allows us to reduce
the consideration of the spectrum of $-\Delta_X$ to a family of
spectral problems on a compact quantum graph (a fundamental
domain). More precisely, denote by $g_1, g_2, g_3$ some choice of
generators of the shift lattice $\mathbb{Z}^3$.  For each
$\veck=(k_1, k_2, k_3) \in (-\pi,\pi]^3 =: \mathbb{T}^3$, let
$-\Delta_X^{(\veck)}$ be the Laplacian that acts on the domain consisting
of functions $u \in H^2_{loc}(X)$ that satisfy the $\delta$-type
conditions at vertices along with the following Floquet conditions,
\begin{equation}
  \label{eq:cond_floquet}
  u_{g_1e}(x) = e^{ik_1}u_e(x),
  \quad
  u_{g_2e}(x) = e^{ik_2}u_e(x),
  \quad
  u_{g_3e}(x) = e^{ik_3}u_e(x),
\end{equation}
for all $x \in X$ and $n=(n_1, n_2, n_3) \in \mathbb{Z}^3$. Then
$-\Delta_X$ is the direct integral of $-\Delta_X^{(\veck)}$ and therefore,
\begin{equation}
  \label{eq:spec_repres}
  \sigma(-\Delta_X) = \bigcup_{\veck \in \mathbb{T}^3}\sigma(-\Delta_X^{(\veck)}).  
\end{equation}
The operator $-\Delta_X^{(\veck)}$ has discrete spectrum
$\sigma(-\Delta_X^{(\veck)})=\{\lambda_j(\veck)\}_{j=1}^\infty$ where we
assume that $\lambda_j$ is increasing in $j$, see
\eqref{eq:ev_numbering}.  The \emph{dispersion relation} of the
operator $-\Delta_X$ is the multivalued function
$\veck \mapsto \{\lambda_j(\veck)\}$ and the spectrum of $-\Delta_X$ is the
range of the dispersion relation for quasimomentum $\veck$ in
$\mathbb{T}^3$. Hence, it suffices to focus on solving the eigenvalue
problems $-\Delta_X^{(\veck)} u =\lambda u$ where $\lambda \in \mathbb{R}$
for $u$ in the domain of $-\Delta_X^{(\veck)}$.   This problem
is unitarily equivalent to the eigenvalue problem on the compact graph
$\Gamma$,
\begin{equation}
\label{eigenvalue-prob}
-\frac{d^2}{dx^2}u=\lambda u, \quad \lambda \in \mathbb{R},
\end{equation}
where $u$ satisfies the respective vertex conditions at the vertices
$A$ and $C$ and the quasi-NK conditions at the vertex $B$:
\begin{equation}
  \label{quasi-conditions}
  \begin{cases}
    &e^{ik_1}u_1(B)=e^{ik_2}u_2(B)=e^{ik_3}u_3(B)=u_4(B) \\
    &e^{ik_1}u_1'(B)+e^{ik_2}u_2'(B)+e^{ik_3}u_3'(B)+u_4'(B)=\gamma_B u(B),
  \end{cases}
\end{equation}
where $\gamma_B$ is taken to be 0 for the graph $\Gamma_2$ and $u_j$
are the restrictions of the function $u$ to the edges $e_j$.  We will
use the notation $\Gk_1$ and $\Gk_2$ (or $\Gk$ if the distinction
between the two graphs is irrelevant) to denote the eigenvalue problem
with condition~\eqref{quasi-conditions} at the vertex $B$.

From now on, we shall emphasize the vertex conditions pictorially by
replacing the names of the vertices by their corresponding boundary
conditions, see Fig.~\ref{fig:QuasiGamma}. 
We will use $\gamma_A$,
$\mathbf{NK}$, $\mathbf{D}$ and $\mathbf{Q_{k,\gamma_B}}$ to indicate the
$\delta$-type, Neumann--Kirchhoff, Dirichlet and quasi-NK vertex
conditions respectively.  We will also occasionally use this
convention in the text, e.g., the vertex $B$ in the above graph $\Gk$
will be mentioned as the $\mathbf{Q_k}$-vertex.  Finally, we will use
the symbol $\lambda_j(\Gk)$ for the $j^{th}$-eigenvalue of the quantum
graph $\Gk$. In particular, we have
\begin{equation}
  \label{eq:fb_decomposition}
  \sigma(-\Delta_X) = \bigcup_{j \geq 1, k \in
  \mathbb{T}^3}\left\{\lambda_j(\Gk)\right\}.  
\end{equation}

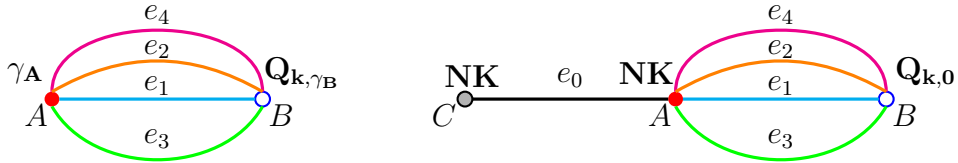
\begin{figure}[h]
  \begin{center}
    \begin{tikzpicture}[thick,scale=0.7]
\coordinate (A) at (4,0);
\coordinate (B) at (8,0);
\fill[red] (A) circle (4pt);
\draw[draw = blue, fill = blue!0,
                           inner sep = 0pt]  (B) circle (4pt);
\draw (4,0.14) [very thick, color=magenta] to[out=90,in=90] (8,0.14);
\node at (3.5,0.5) {$\mathbf{\gamma_A}$};
\node at (3.7,-0.3) {$A$};
\node at (8.35,-0.3) {$B$};
\node at (6,1.6)  {$e_4$};
\draw (4,0.14) [very thick, color=orange] to[out=30,in=150] (8,0.14);
\node at (6,0.95)  {$e_2$};
\draw (4.14,0) [very thick, color=cyan] --  (8-0.14,0);
\node at (8.75,0.5)  {$\mathbf{Q_{k,\gamma_B}}$};
\node at (6,0.25)  {$e_1$};
\draw (4,-0.14) [very thick, color=green] to[out=-60,in=-120] (8,-0.14);
\node at (6,-.75)  {$e_3$};

\end{tikzpicture}
    \hspace{15pt}
    \begin{tikzpicture}[thick,scale=0.7]
\coordinate (A) at (0,0);
\coordinate (B) at (4,0);
\coordinate (C) at (8,0);
\draw[draw = black, fill = black!30,
                           inner sep = 0pt] (A) circle (4pt);
\fill[red] (B) circle (4pt);
\draw[draw = blue, fill = blue!0,
                           inner sep = 0pt]  (C) circle (4pt);
\draw (0.14,0) [very thick] node[above]{$\textbf{NK}$} -- (4-.14,0) node[midway,above] {$e_0$};
\draw (4,0.14) [very thick, color=magenta] to[out=90,in=90] (8,0.14);
\node at (3.45,0.5) {$\textbf{NK}$};
\node at (3.7,-0.3) {$A$};
\node at (-0.4,-0.3) {$C$};
\node at (8.35,-0.3) {$B$};
\node at (6,1.6)  {$e_4$};
\draw (4,0.14) [very thick, color=orange] to[out=30,in=150] (8,0.14);
\node at (6,0.95)  {$e_2$};
\draw (4.14,0) [very thick, color=cyan] --  (8-0.14,0);
\node at (8.75,0.5)  {$\mathbf{Q_{k, 0}}$};
\node at (6,.25)  {$e_1$};
\draw (4,-0.14) [very thick, color=green] to[out=-60,in=-120] (8,-0.14);
\node at (6,-.75)  {$e_3$};

\end{tikzpicture}
  \end{center}
  \caption{The quantum graphs $\Gk_1$ (left) and $\Gk_2$ (right) and their vertex
    conditions. In the figures, the types of the boundary conditions are bold letters while the labels of the vertices are regular letters.}
  \label{fig:QuasiGamma}
\end{figure}

\subsection{Eigenvalue comparison under some surgery transformations}
\label{sec:surgery}

In this section we list some eigenvalue comparison results that will
be useful to prove the existence of a gap in the dispersion relation
in Theorem~\ref{main}(a).

The following interlacing inequality is often useful when variation of
a coupling constant is used to interpolate between different
$\delta$-type conditions and also the Dirichlet condition (which is
interpreted as the $\delta$-type condition with coupling $+\infty$).

\begin{thm}[A special case of {\cite{BKKM}*{Theorem 3.4}}]
  \label{rank1}
  If the graph $\widehat{G}$ is obtained from $G$ by changing the
  coefficient of the 
  $\delta$-type condition at a single vertex $v$ from $\gamma_v$ to
  $\widehat{\gamma}_v \in (\gamma_v, \infty]$.
  Then their eigenvalues satisfy the interlacing inequalities
  \begin{equation}
    \label{eq:interlacing}
    \lambda_{k}(G) \leq \lambda_{k}(\widehat{G}) 
    \leq \lambda_{k+1}(G) \leq \lambda_{k+1}(\widehat{G}),
    \qquad k \geq 1.
  \end{equation}
  If a given value $\Lambda$ has multiplicities $m$ and
  $\widetilde{m}$ in the spectra of $G$ and $\widehat{G}$
  respectively, then the $\Lambda$-eigenspaces of $G$ and
  $\widehat{G}$ intersect along a subspace of dimension
  $\min(m, \widetilde{m})$.  Note that by \eqref{eq:interlacing},
  $\widetilde{m}$ must be equal to $m-1$, $m$ or $m+1$.
\end{thm}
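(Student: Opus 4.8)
The plan is to read the passage from $G$ to $\widehat G$ as a non-negative rank-one perturbation of the associated quadratic form, and to prove the two assertions by separate means: the interlacing inequalities by the min-max principle, and the eigenspace-intersection statement by finite-dimensional linear algebra together with a Lagrange-identity (self-adjointness) input.

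For the inequalities I would work with the quadratic forms. Writing $h_\gamma$ for the form of the Schr\"odinger operator carrying $\delta$-coupling $\gamma$ at the vertex $v$, on the common form domain one has
\[
  h_{\widehat\gamma_v}[f] - h_{\gamma_v}[f]
  = (\widehat\gamma_v - \gamma_v)\,|f(v)|^2 \ge 0,
\]
which is a rank-one non-negative form because $f \mapsto f(v)$ is a bounded linear functional on $H^1$. Monotonicity in the min-max principle then gives $\lambda_k(G) \le \lambda_k(\widehat G)$ at once. For the companion bound $\lambda_k(\widehat G) \le \lambda_{k+1}(G)$ I would restrict both forms to the codimension-one subspace $P = \{f : f(v)=0\}$, on which they literally coincide; if $\mu_k$ are the eigenvalues of the restricted form, the standard codimension-one (Cauchy) interlacing yields $\lambda_k(G) \le \mu_k \le \lambda_{k+1}(G)$ and $\lambda_k(\widehat G) \le \mu_k$, and chaining the two gives the claim. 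The Dirichlet case $\widehat\gamma_v = +\infty$ is exactly the restriction to $P$, so there $\lambda_k(\widehat G) = \mu_k$ and the same interlacing applies verbatim.

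For the multiplicity statement I would pass to the finite-dimensional space $W$ of solutions of $-u'' + q_e u = \Lambda u$ on $\Gamma$ that are continuous at $v$ and satisfy every vertex condition \emph{except} the derivative condition at $v$. On $W$ introduce the functionals $\alpha(f) = f(v)$ and $\beta(f) = \sum_{e \in \mathcal{E}_v} \frac{df_e}{dx}(v)$, and set $E_G = \ker(\beta - \gamma_v\alpha)$ and $E_{\widehat G} = \ker(\beta - \widehat\gamma_v\alpha)$, which are precisely the $\Lambda$-eigenspaces of $G$ and $\widehat G$ (in the Dirichlet case $E_{\widehat G} = \ker\alpha$). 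Since $\gamma_v \ne \widehat\gamma_v$, any common eigenfunction must have $\alpha(f)=0$ and hence $\beta(f)=0$, so $E_G \cap E_{\widehat G} = \ker\alpha \cap \ker\beta$. The crux is to show that $\alpha$ and $\beta$ are \emph{linearly dependent} on $W$: applying Green's identity to two real elements $f,g \in W$ (which suffices, the coefficients and $\Lambda$ being real) and using that the boundary terms vanish at every vertex other than $v$ by self-adjointness of the conditions there, one obtains $\beta(f)\,\alpha(g) = \alpha(f)\,\beta(g)$ for all $f,g$, i.e.\ the vanishing of the Wronskian-type form $\alpha \wedge \beta$.

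Granting this dependence, a short computation closes the argument. With $r = \dim \operatorname{span}\{\alpha,\beta\} \in \{0,1\}$ we get $\dim(\ker\alpha \cap \ker\beta) = \dim W - r$, while $m$ and $\widetilde m$ each equal $\dim W$ or $\dim W - 1$ according to whether $\beta - \gamma_v\alpha$, respectively $\beta - \widehat\gamma_v\alpha$ (or $\alpha$), vanishes on $W$; inspecting the cases $r=0$ and $r=1$ shows the intersection has dimension exactly $\min(m,\widetilde m)$ and that $\widetilde m \in \{m-1,m,m+1\}$. The main obstacle is exactly the linear-dependence claim: \emph{without} the self-adjointness input the functionals $\alpha$ and $\beta$ could a priori be independent (giving $r=2$), which would force the intersection to have dimension $\min(m,\widetilde m)-1$ and contradict the statement. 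Hence the care must go into the Lagrange-identity computation ensuring $\alpha \wedge \beta = 0$ on $W$.
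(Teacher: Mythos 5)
Your proposal cannot be checked against a proof in the paper, because the paper contains none: Theorem~\ref{rank1} is quoted, with attribution, as a special case of \cite{BKKM}*{Theorem 3.4}, and its proof lives entirely in that reference. Judged on its own merits, your interlacing argument is correct and is the standard one: the two forms differ by the nonnegative rank-one term $(\widehat\gamma_v-\gamma_v)|f(v)|^2$, min--max gives $\lambda_k(G)\le\lambda_k(\widehat G)$, and Cauchy interlacing against the common restriction to $P=\{f\colon f(v)=0\}$ (which is exactly $\widehat G$ when $\widehat\gamma_v=\infty$) gives $\lambda_k(\widehat G)\le\mu_k\le\lambda_{k+1}(G)$. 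The skeleton of your multiplicity argument --- the finite-dimensional solution space $W$, the functionals $\alpha,\beta$, the reduction of everything to the rank of the pair $(\alpha,\beta)$, and the identification of the linear-dependence claim as the crux --- is also the right one.

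The gap sits in the very step you flagged as the crux. You run the \emph{bilinear} Lagrange identity on real elements of $W$, justified by ``the coefficients and $\Lambda$ being real.'' This tacitly assumes that every vertex condition on the graph is invariant under complex conjugation. That fails precisely where this paper needs the theorem: it is applied with $G=\Gk$, whose vertex $B$ carries quasi-NK conditions with phases $e^{ik_j}$, and no gauge transformation removes these phases entirely --- one can move them from $B$ to $A$ (as noted in the proof of Lemma~\ref{lem:unitary_equivalence}), but some vertex always retains genuinely complex conditions because $\Gamma$ has cycles. At such a vertex the conjugation symmetry is broken: $W$ has no real structure, so real elements do not span the eigenspaces, and the bilinear Wronskian $\sum_e\left(f_e'g_e-f_eg_e'\right)$ need not vanish there; indeed, with $z_jf_j(w)=F$, $z_jg_j(w)=G$ and $\sum_j z_jf_j'(w)=\sum_j z_jg_j'(w)=0$, it equals $G\sum_j \overline{z_j}f_j'(w)-F\sum_j \overline{z_j}g_j'(w)$, which is generically nonzero. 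The repair is short: use the \emph{sesquilinear} Green identity instead. For $f,g\in W$ one has $0=\sum_e\int_e\left(f''\,\overline{g}-f\,\overline{g''}\right)dx$ because $q$ and $\Lambda$ are real; the sesquilinear boundary form vanishes at every vertex whose (arbitrary, possibly complex) conditions are self-adjoint --- that vanishing is exactly what self-adjointness of vertex conditions means, and it does hold at quasi-NK vertices; and at $v$ continuity yields the contribution $\beta(f)\overline{\alpha(g)}-\alpha(f)\overline{\beta(g)}$. Hence $\beta(f)\overline{\alpha(g)}=\alpha(f)\overline{\beta(g)}$ on $W\times W$, which again forces either $\alpha\equiv0$ or $\beta=c\alpha$ with $c$ real, i.e.\ linear dependence over $\mathbb{C}$. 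With that replacement your case analysis on $r=\dim\operatorname{span}\{\alpha,\beta\}\in\{0,1\}$ goes through verbatim, and the proof is complete in the generality in which the paper actually uses the theorem.
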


For simplicity, from now on, if the graph $G_1$ is obtained from
$G_2$ by changing the $\delta$-type conditions to Dirichlet conditions
at a single vertex, we will say that \textit{$G_1$ is a rank one
  Dirichlet perturbation of the graph $G_2$}.

We now consider the effect on the eigenvalue of the enlargement of a
graph, which is realized by attaching a subgraph at a designated
vertex.  The following theorem is quoted in the narrowest form that is
sufficient for our needs.

\begin{thm}[A special case of {\cite{BKKM}*{Theorem 3.10}}]
  \label{attach}
  Suppose that $\widehat{G}$ is formed from graphs $G$ and $H$ by
  identifying or ``gluing'' two Neumann--Kirchhoff vertices $v_0\in G$
  and $w_0\in H$.  If $\lambda_1(H)<\lambda_1(G)$ and the eigenvalue
  $\lambda_1(G)$ has an eigenfunction which does not vanish at $v_0$
  then $\lambda_1(\widehat{G})<\lambda_1(G)$.
\end{thm}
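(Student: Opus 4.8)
The natural approach is the variational (Rayleigh--Ritz) characterization of the ground-state eigenvalue,
\[
\lambda_1(\widehat{G}) = \inf_{0\neq f}\ \frac{h_{\widehat{G}}[f]}{\|f\|^2_{L^2(\widehat{G})}},
\]
where the infimum runs over the quadratic-form domain of $\widehat{G}$, consisting of functions that are $H^1$ on each edge and continuous at every vertex, and $h_{\widehat{G}}[f] = \sum_e \int_e (|f_e'|^2 + q_e|f_e|^2)$ carries no extra vertex contribution because every vertex, including the glued one, is Neumann--Kirchhoff. The plan is to exhibit a single admissible test function on $\widehat{G}$ whose Rayleigh quotient is strictly below $\lambda_1(G)$; the min-max principle then forces $\lambda_1(\widehat{G})<\lambda_1(G)$.

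First I would take a ground state $\phi$ of $G$ realizing $\lambda_1(G)$ with $\phi(v_0)\neq 0$; such a $\phi$ exists precisely by the second hypothesis. I would also take the ground state $\psi$ of $H$: since $H$ is connected and carries NK conditions, $\psi$ may be chosen strictly positive, so in particular $\psi(w_0)>0$. Setting $c:=\phi(v_0)/\psi(w_0)\neq 0$, I define $f$ to equal $\phi$ on $G$ and $c\psi$ on $H$. Because gluing merely identifies $v_0$ with $w_0$ and does not merge edges, $f$ is continuous across the new vertex by the choice of $c$ and continuous elsewhere, so $f$ lies in the form domain of $\widehat{G}$ and is an admissible competitor.

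The computation then splits over the two pieces. Integrating by parts on $G$ and on $H$ separately, the NK conditions make all boundary terms vanish, so $h_G[\phi]=\lambda_1(G)\|\phi\|^2$ and $h_H[c\psi]=\lambda_1(H)\,c^2\|\psi\|^2$; since the edge sets are disjoint, $h_{\widehat{G}}[f]=\lambda_1(G)\|\phi\|^2+\lambda_1(H)\,c^2\|\psi\|^2$ and $\|f\|^2=\|\phi\|^2+c^2\|\psi\|^2$. The Rayleigh quotient of $f$ is therefore the weighted average
\[
\frac{\lambda_1(G)\|\phi\|^2+\lambda_1(H)\,c^2\|\psi\|^2}{\|\phi\|^2+c^2\|\psi\|^2},
\]
a convex combination of $\lambda_1(G)$ and $\lambda_1(H)$ with positive weights. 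Since $c\neq 0$, the weight $c^2\|\psi\|^2$ on the smaller value $\lambda_1(H)$ is strictly positive, so the quotient is strictly less than $\lambda_1(G)$, completing the argument.

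The one place where care is genuinely needed --- and where both hypotheses are consumed --- is the strictness of the final inequality. The assumption $\lambda_1(H)<\lambda_1(G)$ is what makes $\lambda_1(H)$ the strictly smaller endpoint being averaged in, and the assumption $\phi(v_0)\neq 0$ is exactly what forces $c\neq 0$, i.e.\ a strictly positive weight on that endpoint; were either to fail, the construction would yield only $\lambda_1(\widehat{G})\le\lambda_1(G)$. The sole auxiliary fact I must justify is $\psi(w_0)\neq 0$, needed both to define $c$ and to guarantee that the $H$-part is genuinely switched on; this follows from the strict positivity of the ground state of a connected NK quantum graph, and if one prefers to avoid invoking that result, it can be replaced by a density argument producing a near-ground state of $H$ with Rayleigh quotient still below $\lambda_1(G)$ and nonzero value at $w_0$.
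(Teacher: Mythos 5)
Your proof is correct. Note, however, that the paper itself contains no proof of Theorem~\ref{attach}: it is quoted, in the narrowest form needed, as a special case of Theorem~3.10 of \cite{BKKM}, so there is no internal argument to compare against. Your variational construction --- matching the scaled ground states of $G$ and $H$ at the glued vertex to obtain an admissible test function for $\widehat{G}$, and observing that its Rayleigh quotient is a convex combination of $\lambda_1(G)$ and $\lambda_1(H)$ with strictly positive weight on the smaller value --- is exactly the standard surgery argument by which such results are proved in \cite{BKKM}, and you correctly locate where each hypothesis is consumed: $\lambda_1(H)<\lambda_1(G)$ supplies the strictly smaller endpoint, and $\phi(v_0)\neq 0$ guarantees the weight $c^2\|\psi\|^2$ is nonzero. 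The one auxiliary fact, $\psi(w_0)\neq 0$, you justify via ground-state positivity on a connected NK graph (the paper cites \cite{Kur_lmp19} for this elsewhere), and your fallback perturbation argument also covers the case where one does not wish to invoke it; in the paper's actual application of Theorem~\ref{attach} (inside Lemma~\ref{GammaA-B}), $H$ is a Neumann interval whose ground state is constant, so this point is immediate there.
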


\subsection{Topology of moduli spaces of polygons}

Given $n$ positive real numbers $\{a_j\}$ one can ask what is the
topology of the space of all planar polygons whose side lengths are
$\{a_j\}$.  Two polygons are identified if they can be mapped into
each other by a composition of rotation and translation.  The
resulting spaces may not be smooth and their full classification is
surprisingly rich, see \cite{FarSch} and references
therein.  These spaces make an appearance in our question as the
degenerate curves on which the dispersion relation has an extremum.

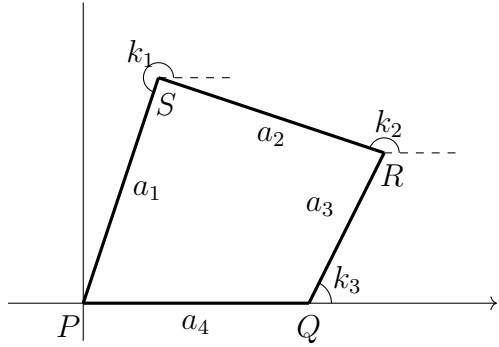
\begin{figure}
  \centering
  \begin{tikzpicture}
\draw[->] (-1,0)--(5.5,0);
\draw (0,-.5)--(0,4);
\coordinate (o) at (0,0);
\coordinate (a) at (3,0);
\coordinate (c) at (1,3);
\coordinate (b) at (4,2);
\coordinate (bx) at (5,2);
\coordinate (cx) at (2,3);
\coordinate (ax) at (5,0);
\draw[very thick] (o) node[left,below,xshift=-0.2cm]{$P$}--(a) node[midway,below] {$a_4$};
\draw[very thick] (a) node[right,below]{$Q$}--(b) node[midway,above left] {$a_3$};
\draw[very thick] (b) node[right,below, xshift=0.1cm]{$R$} --(c) node[midway,below] {$a_2$};
\draw[very thick] (c) node[below, xshift=0.1cm,yshift=-0.05cm]{$S$}--(o) node[midway,right] {$a_1$};
\draw[dashed] (b)--(bx) (c)--(cx);
\pic[draw,"$k_3$",angle radius=0.3cm,angle eccentricity=2] {angle=ax--a--b};
\pic[draw,"$k_1$",angle radius=0.2cm,angle eccentricity=2] {angle=cx--c--o};
\pic[draw,"$k_2$",angle radius=0.2cm,angle eccentricity=2] {angle=bx--b--c};
\end{tikzpicture}
  \caption{Quadrangle corresponding to
    equation~\eqref{eq:quadrangle_condition}.}
  \label{fig:quadrangle}
\end{figure}

For our example we will only require the following simple lemma (which
follows from the results of \cite{FarSch}) addressing the topology of
the set of quadrangles with given four edge lengths, see
Figure~\ref{fig:quadrangle}.

\begin{lemma}
  \label{lemma-quadrangle}
  The curve $\mu$ of solutions $\veck=(k_1,k_2,k_3)\in\mathbb{T}^3$ of
  \begin{equation}
    \label{eq:quadrangle_condition}
    \sum_{1 \leq j \leq 3} e^{ik_j}a_j+a_4=0  
  \end{equation}
  is an algebraic curve of co-dimension 2 if and only if
  \begin{equation}
    \label{quadrangle-strict}
    a_m < \sum_{j\neq m} a_j
  \end{equation}
  for every $m=1,\ldots,4$.

  If there is an $m$ with the inequality reversed, the set of
  solutions $\mu$ is empty.  If there is an $m$ with inequality
  turning into equality, the set of solutions is a single point.
\end{lemma}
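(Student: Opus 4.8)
The plan is to reinterpret~\eqref{eq:quadrangle_condition} as the closing condition of a planar quadrilateral and to identify $\mu$ with a moduli space of such quadrilaterals, whose dimension is then controlled by the regular value theorem. Writing $u_j=e^{ik_j}\in\mathbb{S}^1$ and reading $a_4$ as the vector $a_4\cdot 1$ along the positive real axis, equation~\eqref{eq:quadrangle_condition} says exactly that the four edge vectors $a_1u_1$, $a_2u_2$, $a_3u_3$, $a_4$ sum to zero, i.e.\ they close up into the quadrilateral $PQRS$ of Figure~\ref{fig:quadrangle}. Simultaneous rotation $u_j\mapsto e^{i\theta}u_j$ acts freely on $(\mathbb{S}^1)^4$ and preserves this condition, and pinning the last edge by $u_4=1$ is a global slice for this $SO(2)$-action; hence $\mu$ is homeomorphic to the planar polygon space $M(a_1,a_2,a_3,a_4)$ of \cite{FarSch}, whose expected real dimension is $4-3=1$.

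First I would dispose of the two boundary cases by elementary geometry, tracking when the triangle inequality $\bigl|\sum a_ju_j\bigr|\le\sum a_j$ is saturated. If $a_m>\sum_{j\neq m}a_j$ for some $m$, the three edges other than $m$ have total length strictly below $a_m$ and cannot reach across it, so no closed configuration exists and $\mu=\varnothing$. If $a_m=\sum_{j\neq m}a_j$, every closed configuration must be flat with edge $m$ antiparallel to all the others, which forces each $u_j\in\{\pm1\}$ to a single choice, so $\mu$ is one point.

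For the main case, all inequalities strict, I would treat $F(\veck)=\sum_{j=1}^{3}a_je^{ik_j}+a_4$ as a smooth map $\mathbb{T}^3\to\mathbb{R}^2$ and inspect its Jacobian, whose three columns are the rotated edge vectors $a_j(-\sin k_j,\cos k_j)$. This matrix has rank $2$ at a solution unless the free edges $u_1,u_2,u_3$ are mutually parallel, i.e.\ unless the quadrilateral is flat; at every non-flat solution $0$ is a regular value and the implicit function theorem makes $\mu$ a smooth $1$-manifold locally, while rewriting $F=0$ in $\cos k_j,\sin k_j$ with $\cos^2+\sin^2=1$ simultaneously exhibits $\mu$ as a real algebraic set. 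To produce non-flat solutions I would cut the quadrilateral along the diagonal $PR$: for each diagonal length $d$ in the open interval $\bigl(\max(|a_4-a_3|,|a_2-a_1|),\ \min(a_4+a_3,a_1+a_2)\bigr)$ the triangles $PQR$ and $PRS$ are non-degenerate, and a short check shows this interval is nonempty precisely when all four strict inequalities~\eqref{quadrangle-strict} hold; letting $d$ vary then sweeps out a one-parameter family inside $\mu$, so $\mu$ is nonempty and at least $1$-dimensional.

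The hard part is to pin the dimension at exactly $1$ by controlling the flat configurations, where $F$ drops rank. On the rank-drop locus all $u_j$ are parallel, so each $k_j\in\{0,\pi\}$ and~\eqref{eq:quadrangle_condition} collapses to a signed identity $\epsilon_1a_1+\epsilon_2a_2+\epsilon_3a_3=-a_4$ with $\epsilon_j\in\{\pm1\}$; there are at most $2^3$ such identities, each admitting at most one solution, so the flat solutions form a finite set. Since $\mu$ is a smooth $1$-manifold away from this finite set, it has real dimension exactly $1$, i.e.\ codimension $2$ in $\mathbb{T}^3$. Making the behaviour at the flat points fully rigorous --- the only genuinely delicate issue, as these are the singular points of the algebraic curve --- is cleanest by quoting the four-gon description in \cite{FarSch}, where $M(a_1,\dots,a_4)$ is a closed $1$-manifold for generic lengths and a $1$-dimensional algebraic set with precisely these isolated singularities otherwise.
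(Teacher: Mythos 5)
Your proposal is correct, and it supplies the argument that the paper itself does not spell out: the paper's ``proof'' of Lemma~\ref{lemma-quadrangle} consists of the same quadrilateral interpretation (Figure~\ref{fig:quadrangle}) followed by a citation to the planar polygon space literature \cite{FarSch}, with the topology of $\mu$ then described by quoting \cite{KapMil_jdg95}. You keep the paper's identification of $\mu$ with the $4$-gon moduli space, but make everything self-contained: the triangle inequality settles the empty and one-point cases; the implicit function theorem gives local smoothness of $\mu$ at non-collinear configurations (and your observation that, \emph{at a solution}, parallelism of $u_1,u_2,u_3$ forces parallelism with the fourth edge as well, since $a_4>0$, is the right way to see that rank can only drop at flat configurations); your diagonal-length construction produces a one-parameter family of non-flat solutions, and the check that the interval $\bigl(\max(|a_4-a_3|,|a_1-a_2|),\ \min(a_4+a_3,a_1+a_2)\bigr)$ is nonempty exactly when all four strict inequalities \eqref{quadrangle-strict} hold is correct; and the flat solutions, being confined to $\{0,\pi\}^3$, form a finite set. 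One remark: your closing deferral to \cite{FarSch} is not actually needed for the statement as given. The lemma asserts only that $\mu$ is an algebraic set of dimension $1$ (codimension $2$), not that it is smooth or pure-dimensional at the flat points; since any component of dimension $\geq 2$ would contain non-flat points, near which you have shown $\mu$ is locally a $1$-manifold, no such component can exist, and your diagonal family gives dimension at least $1$. So your argument already closes on its own, and the citation is required only for the finer description (circles, possibly crossing at flat configurations) that the paper takes from \cite{KapMil_jdg95}. In short: the paper's route is a one-line appeal to known results which also yields the full topology of $\mu$; yours is longer but elementary, and it makes transparent why the strict quadrangle inequalities are precisely the nonemptiness condition.
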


The topology of $\mu$ in this particular case has been described, for
example, in \cite{KapMil_jdg95}*{Sec 12}.  The curve is smooth unless
there is a linear combination of $\{a_j\}$ with coefficients $\pm1$
that is equal to zero.  If the curve $\mu$ is smooth it is either a
circle or a disjoint union of two circles.  The non-generic cases when
$\mu$ is not a smooth manifold are of the following types: two circles
intersecting at a point, two circles intersecting at two points and
three circles with one intersection among each pair.

\begin{figure}
  \centering
  \includegraphics[scale=0.4]{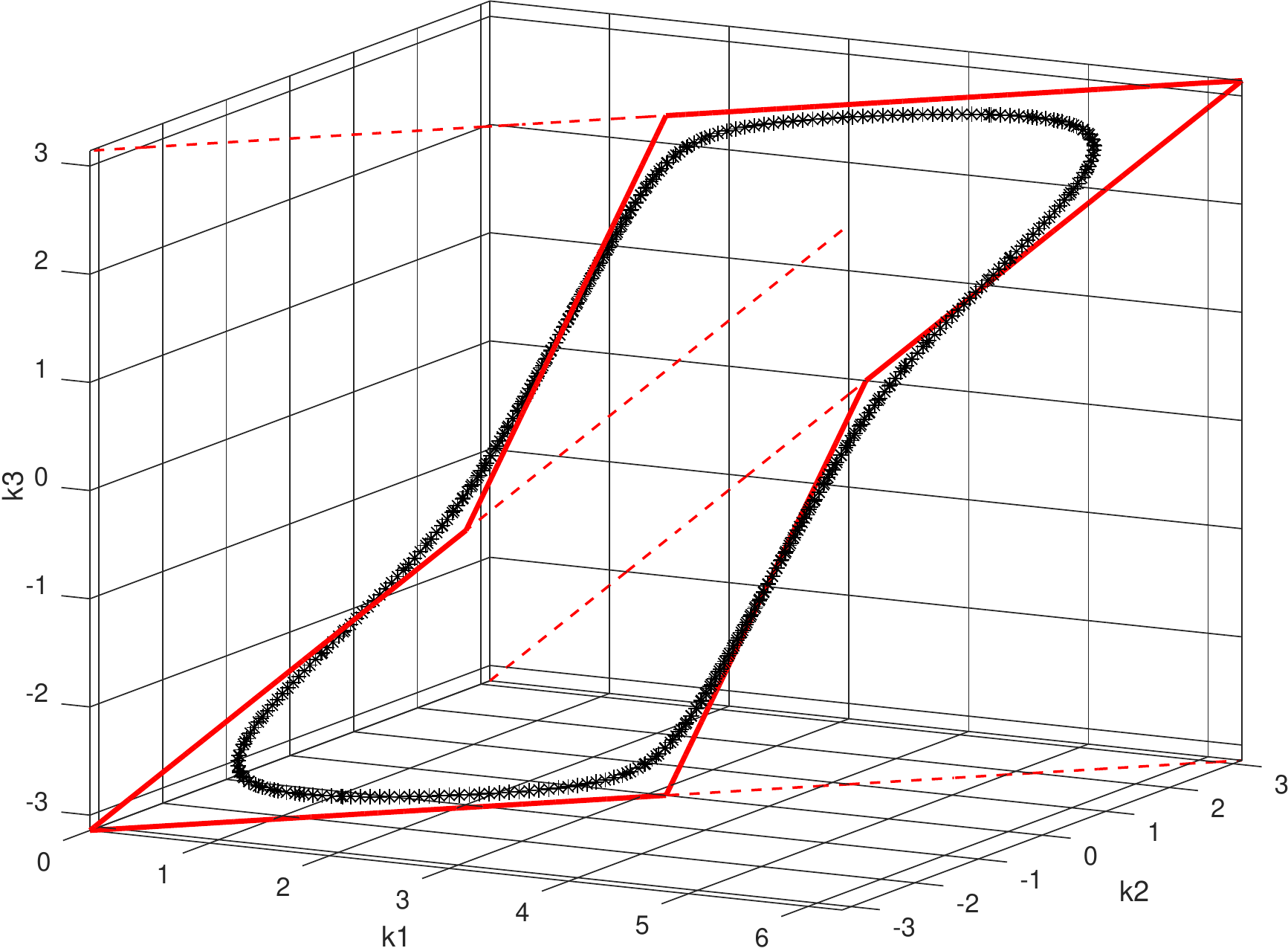}
  \includegraphics[scale=0.4]{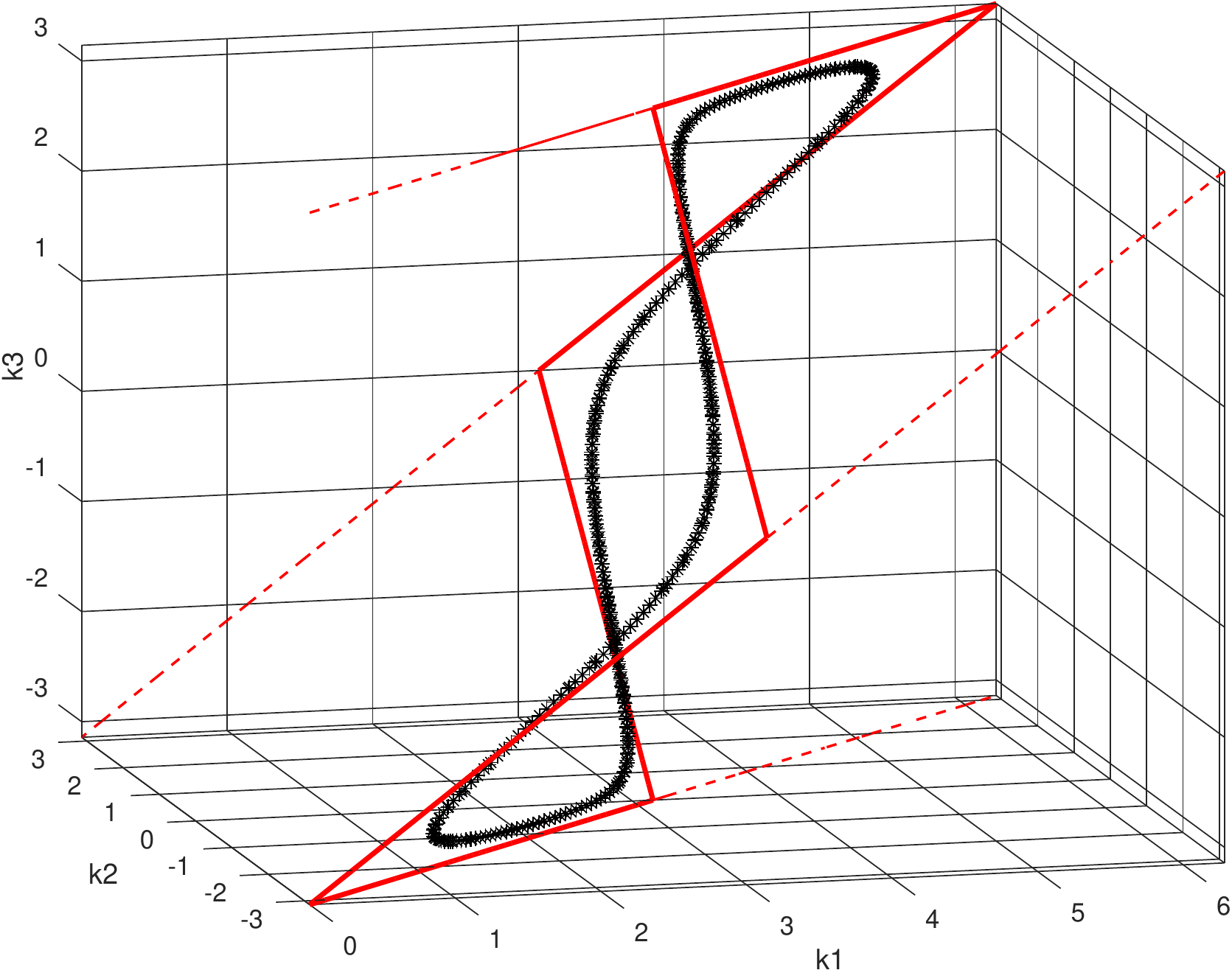}  
  \caption{The set of roots of \eqref{eq:quadrangle_condition} for two
    choices of $\{a_j\}$; two views of the same plot are shown.  The
    ranges are adjusted to $k_1 \in (0,2\pi]$ and
    $k_2,k_3 \in (-\pi,\pi]$ for a smoother plot.  Straight red lines
    correspond to $a_j=1$ for all $j$; Black stars (appear as a thick
    fuzzy line) are produced using $a_1=1.1$, $a_2=0.95$, $a_3=0.9$
    and $a_4=1$.}
  \label{fig:QuadrangleSpace}
\end{figure}

The latter case arises when all $a_j$ are equal.  It is shown in red
solid line in Figure~\ref{fig:QuadrangleSpace}.  Note that the plot is
on a torus, therefore each pair of parallel lines is actually a single
line forming a circle.  A smooth curve $\mu$ for a
generic choice of $a_j\approx 1$ is also shown.

\section{Proof of the main result}
\label{sec:main_proof}

In this section, we present the details of the proof of Theorem
\ref{main}.  Without loss of generality, for the graph $\Gamma_1$ we
will make the assumption
\begin{equation}
  \label{eq:gammas_relation}
  \gamma_A < \gamma_B.
\end{equation}

Starting with the graph $\Gamma_1$, we introduce two of its
modifications.  The graph $\Gamma_1^A$ is obtained by changing the
condition at the vertex $A$ to Dirichlet; the graph $\Gamma_1^B$ is
obtained similarly by placing a Dirichlet condition at the vertex
$B$.  Remembering that a Dirichlet condition is decoupling, we can
picture the result as shown in Fig.~\ref{fig:GammaAB1}.

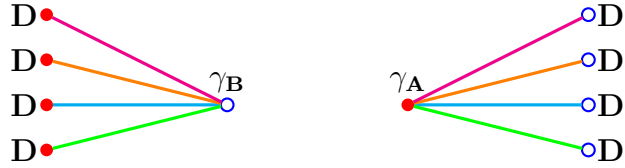
\begin{figure}[h]
  \centering
  \begin{tikzpicture}[thick,scale=0.6]
\coordinate (A1) at (0,-1);
\coordinate (A2) at (0,0);
\coordinate (A3) at (0,1);
\coordinate (A4) at (0,2);
\coordinate (B) at (4,0);
\fill[red] (A1) circle (4pt);
\fill[red] (A2) circle (4pt);
\fill[red] (A3) circle (4pt);
\fill[red] (A4) circle (4pt);
\draw[draw = blue, fill = blue!0,
                           inner sep = 0pt]  (B) circle (4pt);
\draw (4-.14,0) [very thick, color = cyan] -- (0.14,0);
\draw (4-.135,0.034) [very thick, color = orange] -- (.135,1-.034);
\draw (4-.125,.062) [very thick, color = magenta] -- (.125, 2-.062);
\draw (4-.135,-.034) [very thick, color = green] -- (.135,-1+.034);

\node at (4,.5) {$\mathbf{\gamma_B}$};
\node at (-.5,-1) {$\textbf{D}$};
\node at (-.5,0) {$\textbf{D}$};
\node at (-.5,1) {$\textbf{D}$};
\node at (-.5,2) {$\textbf{D}$};
\end{tikzpicture}
  \hspace{35pt}
  \begin{tikzpicture}[thick,scale=0.6]
\coordinate (B1) at (4,-1);
\coordinate (B2) at (4,0);
\coordinate (B3) at (4,1);
\coordinate (B4) at (4,2);
\coordinate (A) at (0,0);
\draw[draw = blue, fill = blue!0,
                           inner sep = 0pt] (B1) circle (4pt);
\draw[draw = blue, fill = blue!0,
                           inner sep = 0pt] (B2) circle (4pt);
\draw[draw = blue, fill = blue!0,
                           inner sep = 0pt](B3) circle (4pt);
\draw[draw = blue, fill = blue!0,
                           inner sep = 0pt] (B4) circle (4pt);
\fill[red]  (A) circle (4pt);
\draw (4-.14,0) [very thick, color = cyan] -- (0.14,0);
\draw (.136,.034) [very thick, color = orange] -- (4-.136,1-.034);
\draw (4-.125,2-.062) [very thick, color = magenta] -- (.125, .062);
\draw (4-.135,-1+.034) [very thick, color = green] -- (.135,-.034);

\node at (0,.5) {$\mathbf{\gamma_A}$};
\node at (4+.5,-1) {$\textbf{D}$};
\node at (4+.5,0) {$\textbf{D}$};
\node at (4+.5,1) {$\textbf{D}$};
\node at (4+.5,2) {$\textbf{D}$};
\end{tikzpicture}
  \caption{The two ``star'' graphs $\Gamma^A_1$ (left)
    and $\Gamma^B_1$ (right) after disconnecting the corresponding Dirichlet vertices $(\textbf{D})$.}
  \label{fig:GammaAB1}
\end{figure}

By placing Dirichlet conditions at vertices $A$ or $B$ of the graph
$\Gamma_2$, we analogously construct the two graphs $\Gamma_2^A$ and
$\Gamma_2^B$.  We remark that the graph $\Gamma_2^A$ has two connected
components, see Fig.~\ref{fig:GammaA2} and Fig.~\ref{fig:GammaB2}.  Using the tools introduced in Section~\ref{sec:surgery}
we establish the following comparison result, which compares the first
eigenvalue $\Gamma_j^A$ with the first eigenvalue of $\Gamma_j^B$,
where $j$ is either 1 or 2.

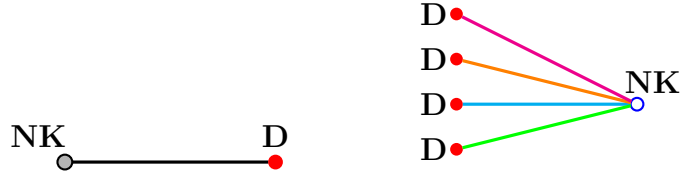
\begin{figure}[h]
  \begin{center}
    \begin{tikzpicture}[thick,scale=0.7]
\coordinate (A2) at (0,0);
\coordinate (B) at (4,0);
\fill[red] (B) circle (4pt);
\draw[draw = black, fill = black!30,
                           inner sep = 0pt]  (A2) circle (4pt);
\draw (4-.14,0) [very thick, color = black] -- (0.14,0);

\node at (4,.5) {$\textbf{D}$};
\node at (-.5,.5) {$\textbf{NK}$};
\end{tikzpicture}
    \hspace{30pt}
    \begin{tikzpicture}[thick,scale=0.6]
\coordinate (A1) at (0,-1);
\coordinate (A2) at (0,0);
\coordinate (A3) at (0,1);
\coordinate (A4) at (0,2);
\coordinate (B) at (4,0);
\fill[red] (A1) circle (4pt);
\fill[red] (A2) circle (4pt);
\fill[red] (A3) circle (4pt);
\fill[red] (A4) circle (4pt);
\draw[draw = blue, fill = blue!0,
                           inner sep = 0pt]  (B) circle (4pt);
\draw (4-.14,0) [very thick, color = cyan] -- (0.14,0);
\draw (4-.135,0.034) [very thick, color = orange] -- (.135,1-.034);
\draw (4-.125,.062) [very thick, color = magenta] -- (.125, 2-.062);
\draw (4-.135,-.034) [very thick, color = green] -- (.135,-1+.034);

\node at (4.35,.5) {$\textbf{NK}$};
\node at (-.5,-1) {$\textbf{D}$};
\node at (-.5,0) {$\textbf{D}$};
\node at (-.5,1) {$\textbf{D}$};
\node at (-.5,2) {$\textbf{D}$};
\end{tikzpicture}
  \end{center}
  \caption{The quantum graphs $\Gamma_2^{A,1}$ (left) and $\Gamma_2^{A,2}$ 
  after disconnecting from the Dirichlet vertex of their union $\Gamma_2^A$.}
  \label{fig:GammaA2}
\end{figure}

\begin{figure}[h]
  \begin{center}
    \begin{tikzpicture}[thick,scale=0.6]
\coordinate (B1) at (4,-1);
\coordinate (B2) at (4,0);
\coordinate (B3) at (4,1);
\coordinate (B4) at (4,2);
\coordinate (A) at (0,0);
\coordinate (C) at (-4,0);
\draw[draw = blue, fill = blue!0,
                           inner sep = 0pt] (B1) circle (4pt);
\draw[draw = blue, fill = blue!0,
                           inner sep = 0pt] (B2) circle (4pt);
\draw[draw = blue, fill = blue!0,
                           inner sep = 0pt](B3) circle (4pt);
\draw[draw = blue, fill = blue!0,
                           inner sep = 0pt] (B4) circle (4pt);
\fill[red]  (A) circle (4pt);
\draw[draw = black, fill = black!30,
                           inner sep = 0pt] (C) circle (4pt);

\draw (-4+.14,0) [very thick, color = black] -- (-.14,0);
\draw (4-.14,0) [very thick, color = cyan] -- (0.14,0);
\draw (.136,.034) [very thick, color = orange] -- (4-.136,1-.034);
\draw (4-.125,2-.062) [very thick, color = magenta] -- (.125, .062);
\draw (4-.135,-1+.034) [very thick, color = green] -- (.135,-.034);

\node at (-4,.65) {\textbf{NK}};
\node at (-.25,.65) {$\textbf{NK}$};
\node at (4+.5,-1) {$\textbf{D}$};
\node at (4+.5,0) {$\textbf{D}$};
\node at (4+.5,1) {$\textbf{D}$};
\node at (4+.5,2) {$\textbf{D}$};
\end{tikzpicture}
  \end{center}
  \caption{The quantum graph $\Gamma_2^B$.}
  \label{fig:GammaB2}
\end{figure}
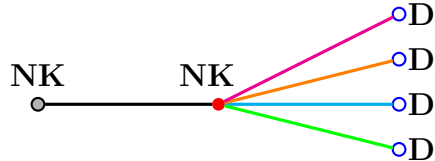

\begin{lemma}
  \label{GammaA-B}
  The first eigenvalue of $\Gamma^B$ is always strictly less than than
  the first eigenvalue of $\Gamma^A$,
  \begin{equation}
    \label{eq:gap_condition}
    \lambda_1(\Gamma^B)<\lambda_1(\Gamma^A)
  \end{equation}
\end{lemma}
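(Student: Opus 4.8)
The plan is to handle $\Gamma_1$ and $\Gamma_2$ separately, in each case reducing \eqref{eq:gap_condition} to a comparison of the explicitly described ``star'' graphs in Figures~\ref{fig:GammaAB1}--\ref{fig:GammaB2}, and to base every \emph{strict} inequality on a single structural fact: the ground state of a connected Schr\"odinger problem carrying Dirichlet conditions only at some leaves cannot vanish at a vertex equipped with a $\delta$-type or Neumann--Kirchhoff condition. I would record this auxiliary fact first. Since the quadratic form is unchanged under $f\mapsto|f|$, the first eigenfunction may be taken nonnegative, and the one-dimensional maximum principle then forces it to be strictly positive in the interior of every edge on which it is not identically zero. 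If such a nonnegative ground state $f$ vanished at a non-Dirichlet vertex $v$, then on each edge at $v$ it would solve a Dirichlet problem and the inward derivatives $\frac{df_e}{dx}(v)$ would all be $\ge 0$ with at least one strictly positive; this contradicts the balance condition $\sum_{e\in\mathcal E_v}\frac{df_e}{dx}(v)=\gamma_v f(v)=0$. Hence $f(v)\neq 0$, and the same argument propagated across Neumann--Kirchhoff vertices shows $f>0$ on every edge of a connected graph of this type.

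For $\Gamma_1$, imposing a Dirichlet condition at $A$ (resp.\ $B$) decouples all four edges meeting there, so $\Gamma_1^A$ and $\Gamma_1^B$ are the \emph{same} four-edge star with Dirichlet leaves, differing only in the coupling at the central vertex: $\gamma_B$ for $\Gamma_1^A$ and $\gamma_A$ for $\Gamma_1^B$, with $\gamma_A<\gamma_B$ by \eqref{eq:gammas_relation}. Theorem~\ref{rank1} immediately gives $\lambda_1(\Gamma_1^B)\le\lambda_1(\Gamma_1^A)$. To upgrade this to a strict inequality I would argue by contradiction: equality would force, via the eigenspace-intersection statement of Theorem~\ref{rank1}, a common first eigenfunction $f$ of the two problems; subtracting the two $\delta$-conditions at the center yields $(\gamma_A-\gamma_B)f(\mathrm{center})=0$, so $f$ vanishes at the center, contradicting the auxiliary fact. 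Equivalently, inserting the ground state of $\Gamma_1^A$ into the Rayleigh quotient of $\Gamma_1^B$ lowers the value by $(\gamma_B-\gamma_A)|f(\mathrm{center})|^2/\|f\|^2>0$.

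For $\Gamma_2$, the Dirichlet condition at $A$ disconnects the tail, so $\Gamma_2^A$ is the disjoint union of the Neumann--Dirichlet pendant edge $\Gamma_2^{A,1}$ and the four-edge Neumann--Kirchhoff star $\Gamma_2^{A,2}$ (see Figure~\ref{fig:GammaA2}), whence $\lambda_1(\Gamma_2^A)=\min\bigl(\lambda_1(\Gamma_2^{A,1}),\lambda_1(\Gamma_2^{A,2})\bigr)$; by contrast $\Gamma_2^B$ is precisely that same star with the pendant edge attached at its Neumann--Kirchhoff center. I would bound $\lambda_1(\Gamma_2^B)$ strictly below each of the two numbers. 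For the star, apply Theorem~\ref{attach} with $G$ the star and $H$ the pendant edge, an interval carrying Neumann--Kirchhoff conditions at both ends and hence $\lambda_1(H)=0<\lambda_1(G)$; the gluing point is the center, where by the auxiliary fact the star's ground state is nonzero, so $\lambda_1(\Gamma_2^B)<\lambda_1(\Gamma_2^{A,2})$. For the pendant, extend the ground state of $\Gamma_2^{A,1}$ by zero onto the star edges; this is admissible for $\Gamma_2^B$ (it vanishes at $A$, so continuity across the Neumann--Kirchhoff vertex and the Dirichlet conditions at the leaves all hold), and its Rayleigh quotient equals $\lambda_1(\Gamma_2^{A,1})$, giving $\lambda_1(\Gamma_2^B)\le\lambda_1(\Gamma_2^{A,1})$; equality would make this test function a ground state of the connected graph $\Gamma_2^B$, contradicting the strict positivity from the auxiliary fact. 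Combining the two strict bounds yields $\lambda_1(\Gamma_2^B)<\lambda_1(\Gamma_2^A)$.

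The weak inequalities are routine—pure monotonicity in Theorem~\ref{rank1} and elementary test-function or attachment comparisons—so the main obstacle is strictness. Every strict sign ultimately rests on the ground state not vanishing at the distinguished vertex, so the genuine content of the argument is the maximum-principle and Kirchhoff-balance reasoning behind the auxiliary fact. In the $\Gamma_2$ case one must additionally keep careful track of the fact that $\Gamma_2^A$ is disconnected (so its ground energy is the minimum over components) and verify that the extended functions genuinely lie in the relevant form domains.
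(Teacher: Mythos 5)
Your proposal is correct and follows essentially the same route as the paper: for $\Gamma_1$ you upgrade the weak inequality from Theorem~\ref{rank1} to a strict one via non-vanishing of the ground state at the degree-4 vertex, and for $\Gamma_2$ you prove exactly the paper's two inequalities $\lambda_1(\Gamma_2^B)<\lambda_1(\Gamma_2^{A,1})$ and $\lambda_1(\Gamma_2^B)<\lambda_1(\Gamma_2^{A,2})$, the latter by the identical application of Theorem~\ref{attach}. The only immaterial deviation is the pendant-edge comparison, where you use a zero-extension test function in the Rayleigh quotient while the paper changes the NK condition at $A$ of $\Gamma_2^B$ to Dirichlet and invokes Theorem~\ref{rank1} again; both rest on the same ground-state non-vanishing fact, which you prove from scratch and the paper asserts (citing \cite{Kur_lmp19} in a neighboring lemma).
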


\begin{proof}
  The graphs $\Gamma_1^B$ and $\Gamma_1^A$ differ only in the
  coefficient of the $\delta$-type condition at the vertex of degree 4
  (we are in the situation of pure Laplacian, with no potential).
  Since the coefficient of $\Gamma_1^B$ (which is $\gamma_A$) is
  smaller than the coefficient of $\Gamma_1^A$, see
  equation~\eqref{eq:gammas_relation}, we immediately get from
  Theorem~\ref{rank1} that $\lambda_1(\Gamma_1^B) \leq
  \lambda_1(\Gamma_1^A)$.  The case of equality is excluded because
  the ground state must be non-zero on the vertex of degree 4 which
  means it cannot satisfy $\delta$-type conditions with two different
  constants (hence it cannot be a common eigenfunction).

  For the graph $\Gamma_2$ we establish two inequalities,
  $\lambda_1(\Gamma_2^B) < \lambda_1(\Gamma_2^{A,1})$ and
  $\lambda_1(\Gamma_2^B) < \lambda_1(\Gamma_2^{A,2})$.  The first
  follows by changing the condition at vertex $A$ of the graph
  $\Gamma_2^B$ from NK to Dirichlet: the eigenvalue strictly increases
  (since the eigenfunction of $\Gamma_2^B$ is non-zero at $A$) and the
  graph decouples into several disjoint parts one of which coincides
  with $\Gamma_2^{A,1}$.

  To prove the second inequality, we start with
  $\lambda_1(\Gamma_2^{A,2})>0$ 
  whose eigenfunction does not vanish on vertex $B$, and attach to $B$
  a Neumann interval of length $\ell_0$ whose first eigenvalue is $0
  <\lambda_1(\Gamma_2^{A,2})$.  The strict inequality follows from
  Theorem~\ref{attach}.
\end{proof}

In our terminology, the graphs $\Gamma^A$ and $\Gamma^B$ are the rank
one Dirichlet perturbations of the corresponding graph $\Gamma$.  The
next important observation is that they are also, in fact, the rank
one Dirichlet perturbations of the corresponding graph $\Gamma^\veck$
for any $\veck$.

\begin{lemma}
  \label{lem:unitary_equivalence}
  The rank one Dirichlet perturbation of the graph $\Gamma^\veck$ at
  the vertex $A$ (corresp.\ $B$) is unitarily equivalent to $\Gamma^A$
  (corresp.\ $\Gamma^B$) for any $\veck \in \mathbb{T}^3$.
\end{lemma}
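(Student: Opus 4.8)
The plan is to produce, for each fixed $\veck$, an explicit unitary operator intertwining the rank one Dirichlet perturbation of $\Gamma^\veck$ with the corresponding $\veck$-independent graph. The two vertices are handled separately, and I expect the perturbation at $B$ to be essentially trivial while the perturbation at $A$ carries the real content.

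First I would dispose of the Dirichlet perturbation at $B$. Recall from Section~\ref{sec:qg_vc} that a Dirichlet condition is \emph{decoupling}: imposing it at a vertex of degree $d$ is equivalent to severing all incident edges there and placing Dirichlet at each of the resulting degree-one endpoints. Applying this to vertex $B$ of $\Gamma^\veck$ replaces the quasi-NK conditions~\eqref{quasi-conditions} by $u_1(B)=u_2(B)=u_3(B)=u_4(B)=0$. Since every boundary value at $B$ now vanishes, the phases $e^{ik_j}$ drop out of the conditions altogether, and the perturbed graph is \emph{literally identical} to $\Gamma^B$ for every $\veck$. Thus this case needs no nontrivial unitary: it is already an equality.

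The substantive case is the Dirichlet perturbation at $A$. Decoupling severs $e_1,\dots,e_4$ at $A$ and imposes Dirichlet there, but the quasi-NK conditions at $B$, and hence the phases $e^{ik_j}$, persist. To remove this $\veck$-dependence I would introduce the unitary $U_\veck$ on $L^2(\Gamma)$ acting as multiplication by the constant $e^{ik_j}$ on each edge $e_j$ (with $k_4=0$, and, for $\Gamma_2$, leaving the tail edge $e_0$ untouched). Each factor has unit modulus, so $U_\veck$ is unitary; each factor is constant along its edge, so $U_\veck$ commutes with $-d^2/dx^2$ and introduces no magnetic potential. Writing $v_j=e^{ik_j}u_j$, the first line of~\eqref{quasi-conditions} becomes the continuity statement $v_1(B)=\cdots=v_4(B)$, while the second becomes $\sum_j v_j'(B)=\gamma_B v(B)$, which is exactly the $\delta$-type (or, for $\Gamma_2$, NK) condition at $B$ defining $\Gamma^A$. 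The Dirichlet condition at $A$ is preserved since multiplying a vanishing boundary value by a nonzero constant keeps it zero, and for $\Gamma_2$ the component consisting of the untouched tail $e_0$ (with NK at $C$ and the new Dirichlet at $A$) is unaffected, so $U_\veck$ carries $\Gamma_2^\veck$ onto $\Gamma_2^{A,1}\sqcup\Gamma_2^{A,2}=\Gamma_2^A$. Hence $U_\veck$ intertwines the two operators and yields the claimed unitary equivalence.

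The single point requiring care — and the reason the Dirichlet decoupling at $A$ is indispensable — is the compatibility of this constant-phase gauge with the conditions at the far ends of the edges. Had $A$ retained its original $\delta$-type (or NK) condition, multiplying $e_j$ by $e^{ik_j}$ would turn the continuity requirement $u_1(A)=\cdots=u_4(A)$ into a phase-weighted quasi-NK condition at $A$, merely relocating the $\veck$-dependence rather than eliminating it. It is precisely because the Dirichlet condition forces each $u_j(A)=0$, making the per-edge phase harmless at that end, that a single gauge $U_\veck$ can simultaneously trivialize the conditions at $B$ and respect those at $A$. Checking this compatibility is the main obstacle, and it is resolved entirely by the decoupling property recorded in Section~\ref{sec:qg_vc}.
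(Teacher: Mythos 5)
Your proposal is correct and takes essentially the same approach as the paper: the paper's proof likewise rests on the decoupling property of the Dirichlet condition and removes the quasimomenta by a gauge transform, citing \cite{BK}*{Thm 2.6.1} where you instead write out the unitary $U_\veck$ explicitly. The edge-wise constant phase multiplication you construct is precisely that gauge transform, so the two arguments coincide in substance.
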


\begin{proof}
  Since the Dirichlet perturbation is decoupling, the resulting graphs
  have no cycles and therefore any quasi-momenta can be removed by a
  gauge transform, see \cite{BK}*{Thm 2.6.1}.  To put it
  another way, replacing the vertex condition~\eqref{quasi-conditions}
  at $B$ with Dirichlet removes all dependence on the quasi-momenta
  $\veck$.  Similarly, the quasi-NK conditions could be equivalently
  imposed at the vertex $A$, where replacing them with Dirichlet also
  removes all dependence on $\veck$.
\end{proof}

\begin{lemma}
  \label{lem:equilateral_case}
  The first eigenvalue $\lambda_1$ of $-\Delta$ on $\Gamma^B$ is
  simple.  If $\ell_1 = \ell_2 = \ell_3 = \ell_4$, the eigenfunction
  corresponding to $\lambda_1$ is identical on these four edges,
  $\phi_1 \equiv \phi_2 \equiv \phi_3 \equiv \phi_4$, and non-zero
  except at $B$.
\end{lemma}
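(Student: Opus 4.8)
The plan is to prove first that $\lambda_1(\Gamma^B)$ is simple with a strictly positive ground state, and then to extract the symmetric structure in the equilateral case from the $S_4$-symmetry of the four petals. Throughout I use that $-\Delta$ on $\Gamma^B$ is a real operator, so its eigenfunctions may be taken real-valued, and that $\Gamma^B$ is connected: it is a star centered at $A$ with four edges terminating at Dirichlet ($B$) vertices, together with the tail edge $e_0$ and the NK vertex $C$ in the case of $\Gamma_2^B$. The quadratic form associated with $-\Delta$ on $\Gamma^B$ is $\int_{\Gamma^B}|u'|^2+\gamma_A|u(A)|^2$, taken over real $u\in H^1(\Gamma^B)$ continuous at the interior vertices and vanishing at the Dirichlet vertices. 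The first step is to observe that replacing a ground state $\phi$ by $|\phi|$ changes neither the form value nor the $L^2$-norm, since $\bigl|(|\phi|)'\bigr|=|\phi'|$ almost everywhere and $\bigl||\phi|(A)\bigr|=|\phi(A)|$; hence $|\phi|$ is again a minimizer of the Rayleigh quotient and therefore a genuine eigenfunction solving $-|\phi|''=\lambda_1|\phi|$ on each edge and satisfying the vertex conditions.

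Next I argue that the non-negative eigenfunction $|\phi|$ cannot vanish away from the Dirichlet vertices. On any edge, a non-negative solution of $-u''=\lambda_1 u$ vanishing at an interior point also has vanishing derivative there, hence vanishes identically by uniqueness for the ODE. At the central vertex, if $|\phi|(A)=0$ then each incoming derivative $\frac{d|\phi|}{dx}(A)$ is non-negative (it is the inward derivative of a non-negative function at its minimum), while the $\delta$-type (or NK) condition forces $\sum_{e\in\mathcal{E}_A}\frac{d|\phi|}{dx}(A)=\gamma_A|\phi|(A)=0$; thus every such derivative vanishes and, by ODE uniqueness, $|\phi|\equiv 0$ on every incident edge, a contradiction. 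The same reasoning applies at $C$. Hence $|\phi|>0$ everywhere except at the Dirichlet vertices, so $\phi$ itself has a constant sign. If the ground eigenspace had dimension at least two it would contain two $L^2$-orthogonal eigenfunctions, each of constant sign, which is impossible since two functions of constant sign that are positive a.e.\ cannot be orthogonal. Therefore $\lambda_1(\Gamma^B)$ is simple and its eigenfunction is non-zero except at $B$, giving the first and last assertions.

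For the equilateral case $\ell_1=\ell_2=\ell_3=\ell_4$, note that for the pure Laplacian the four petals are identical intervals, so every permutation of them is a metric automorphism of $\Gamma^B$ fixing $A$ (and the tail and $C$), and hence induces a unitary $U_\sigma$ on $L^2(\Gamma^B)$ commuting with $-\Delta$. Such $U_\sigma$ preserves the one-dimensional ground eigenspace and acts on it by a scalar of modulus $1$; for a transposition $U_\sigma^2=\mathrm{Id}$, so the scalar is $\pm1$, and since $U_\sigma$ sends the positive eigenfunction $\phi$ to another positive function the scalar must be $+1$. As transpositions generate $S_4$, the eigenfunction $\phi$ is invariant under all permutations of the petals, i.e.\ $\phi_1\equiv\phi_2\equiv\phi_3\equiv\phi_4$.

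The main obstacle is the strict positivity at the central vertex: because a $\delta$-type condition permits $|\phi|$ to have a corner at $A$, one cannot invoke a naive maximum principle and must instead combine the sign of the inward derivatives with the vertex condition, as above. Once this is in place, simplicity and the symmetry argument are routine.
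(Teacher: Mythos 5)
Your proof is correct, and it is substantially more self-contained than the paper's, which defers at every step to the literature: the paper quotes simplicity and positivity of the ground state from Kurasov's variational results \cite{Kur_lmp19} (or from the secular equation), and obtains the equilateral symmetry by restricting $-\Delta$ to the symmetric subspace of its domain \cite{BanBerJoyLiu_prep17}, observing that the restricted ground state is positive, and concluding that a positive eigenfunction must be the global ground state. You instead reprove the Perron--Frobenius-type input from scratch: the $|\phi|$ Rayleigh-quotient trick, ODE uniqueness on edges, and---the genuinely delicate point, which you handle correctly---the analysis at the vertex $A$, where the non-negativity of the inward derivatives at a putative zero combined with the $\delta$-type condition forces $|\phi|$ to vanish on all incident edges, contradicting connectivity. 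Your symmetry step also runs in the opposite logical direction from the paper's: the paper produces a positive symmetric eigenfunction and uses ``positive $\Rightarrow$ ground state'' to identify it (no prior appeal to simplicity), whereas you use simplicity first, so that each petal permutation acts on the one-dimensional ground eigenspace by a real unitary scalar, which positivity forces to equal $+1$. What the paper's route buys is brevity and generality, since Kurasov's results cover edge potentials and a large class of vertex conditions; what yours buys is a complete, elementary argument that makes explicit exactly which features of $\Gamma^B$ are used (connectivity after decoupling, the Dirichlet vertices as the only admissible zeros, and the sign structure at the $\delta$-vertex), and it applies verbatim to both $\Gamma_1^B$ and $\Gamma_2^B$.
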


\begin{proof}
  The proof is identical for $\Gamma^B_1$ and $\Gamma^B_2$.
  Simplicity of the eigenvalue follows from general variational
  principles \cite{Kur_lmp19} (or can be deduced from the secular
  equation for the corresponding graphs, see also the proof of
  Proposition~\ref{prop:quant_edge_condition} below).  The first
  eigenfunction is known to be positive, except where a Dirichlet
  condition is enforced, for a large family of vertex conditions
  \cite{Kur_lmp19}.  Symmetry can be deduced by, for example,
  restricting $-\Delta$ to the symmetric subspace of the operator's
  domain \cite{BanBerJoyLiu_prep17}, observing that the first
  eigenfunction of the restricted operator is positive and concluding
  that it corresponds to a positive eigenfunction of the full operator
  and therefore must be the ground state.
\end{proof}

\begin{proof}[Proof of Theorem \ref{main}]
  Since by Lemma~\ref{lem:unitary_equivalence} $\Gamma^A$ and
  $\Gamma^B$ are obtained by a rank-1 Dirichlet perturbation from the
  quantum graph $\Gamma^\veck$ for any $\veck$, Theorem~\ref{rank1}
  yields the inequalities
  \begin{equation}
    \label{eq:rank1_B}
    \lambda_1(\Gamma^\veck) \leq \lambda_1(\Gamma^B)
    \leq \lambda_2(\Gamma^\veck),
  \end{equation}
  and
  \begin{equation}
    \label{eq:rank1_A}
    \lambda_1(\Gamma^\veck) \leq \lambda_1(\Gamma^A)
    \leq \lambda_2(\Gamma^\veck),
  \end{equation}
  which hold of all $\veck \in \mathbb{T}^3$.
  Adding the result of Lemma~\ref{GammaA-B}, we get
  \begin{equation}
    \label{eq:gap_estimate}
    \lambda_1(\Gamma^\veck) \leq \lambda_1(\Gamma^B)
    < \lambda_1(\Gamma^A) \leq \lambda_2(\Gamma^\veck),
  \end{equation}
  obtaining part (a) of Theorem~\ref{main}.

  We will now show that the first inequality in
  \eqref{eq:gap_estimate} turns into equality
  \begin{equation}
    \label{eq:band1equality}
    \lambda_1(\Gamma^\veck)=\lambda_1(\Gamma^B)  
  \end{equation}
  for $k$ in a
  one-dimensional curve $\gamma$ in $\mathbb{T}^3$.
  
  Let $\varphi$ be the $\lambda_1(\Gamma^B)$-eigenfunction of
  $\Gamma^B$.  By Theorem~\ref{rank1},
  equality~\eqref{eq:band1equality} holds if and only if $\varphi$ is
  also an eigenfunction of $\Gamma^\veck$.
  We denote by $\varphi_j$ the restriction of $\varphi$ on $e_j$ for
  $0\leq j\leq 4$. Obviously, $\varphi$ satisfies the first condition
  in \eqref{quasi-conditions} at the vertex $B$. Therefore,
  equality~\eqref{eq:band1equality} holds if and only if
  $\veck=(k_1,k_2,k_3) \in [-\pi,\pi)^3$ is such that 
  \begin{equation}
    \label{Qk-condition}
    \sum_{1 \leq j \leq 3} e^{ik_j}\varphi_j'(B)+\varphi_4'(B)=0.
  \end{equation}
  By Lemma~\ref{lemma-quadrangle}, the set of solutions of
  \eqref{Qk-condition} is a non-trivial algebraic curve of
  co-dimension 2 if 
  \begin{equation}
    \label{Qk-necessary-sufficient}
    2\max_{j} \left|\varphi_j'(B)\right| <
    \sum_{1 \leq j\leq 4} \left|\varphi_j'(B)\right|.
  \end{equation}
  If the lengths $\ell_j$ ($1 \leq j\leq 4$) are
  approximately equal then (by eigenfunction continuity and
  Lemma~\ref{lem:equilateral_case}) all $\left|\varphi_j'(B)\right|$
  are approximately equal and condition
  \eqref{Qk-necessary-sufficient} is satisfied.  This completes the
  proof of part (b).

  Finally, the robustness of the degenerate gap edge under a small
  perturbation of edge lengths or edge potentials follows directly
  from continuity of eigenvalue and eigenfunction data
  \cites{BerKuc_incol12,KucZha_jmp19} and the fact that conditions for
  the degenerate gap edge are inequalities \eqref{eq:gap_condition}
  and \eqref{Qk-necessary-sufficient}.
\end{proof}

With a little extra effort we can provide a \textbf{quantitative
  condition} on the lengths $\ell_j$ to ensure the validity of the
quadrangle inequalities~\eqref{Qk-necessary-sufficient} whenever all
of the derivatives $\varphi_1'(B),\ldots,\varphi_4'(B)$ are not
zero.
\begin{prop}
  \label{prop:quant_edge_condition}
  Let $\rho_0$ be the unique solution in $(2,3)$ to the
  equation $$\rho^2-\frac{\rho^3}{3}=\frac{\pi^2}{24},$$ and assume
  further that
  $$\min\left\{\left(\rho_0\cdot\min_{1 \leq j\leq 4}\ell_j\right), \ell_0\right\} \geq \max_{1 \leq j\leq 4}\ell_j$$
  Then $2\cdot|\varphi_j'(B)|<|\varphi_1'(B)|+|\varphi_2'(B)|+|\varphi_3'(B)|+|\varphi_4'(B)|$ for each $1\leq j\leq 4$. As a consequence, the same conclusion in part (b) of Theorem \ref{main} holds.
\end{prop}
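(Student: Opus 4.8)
The plan is to compute $\varphi$ explicitly on the four edges joining $A$ and $B$, convert the quadrangle inequalities \eqref{Qk-necessary-sufficient} into a statement about the numbers $\sin(\omega\ell_j)$, and then confine $\omega:=\sqrt{\lambda_1(\Gamma^B)}$ to a range narrow enough for the length hypotheses to close the argument. Since $\ell_0$ appears in the hypothesis, I work with the tailed graph $\Gamma^B=\Gamma_2^B$. Parametrising each edge $e_j$ ($1\le j\le4$) from $A$ at $x=0$ to the Dirichlet vertex $B$ at $x=\ell_j$, the eigenfunction that is continuous at $A$ with common value $c:=\varphi(A)$ and vanishes at $B$ must be $\varphi_j(x)=c\,\sin\!\big(\omega(\ell_j-x)\big)/\sin(\omega\ell_j)$. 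Differentiating at $B$ gives $|\varphi_j'(B)|=|c|\,\omega/\sin(\omega\ell_j)$, where $\sin(\omega\ell_j)>0$ once we know $\omega\ell_j\in(0,\pi)$ (established below). Since $c\neq0$ by Lemma~\ref{lem:equilateral_case}, all four derivatives are indeed nonzero and, after cancelling $|c|\omega$, the inequalities \eqref{Qk-necessary-sufficient} become the polygon inequalities $s_j<\sum_{i\neq j}s_i$ for the positive weights $s_i:=1/\sin(\omega\ell_i)$.

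Next I would reduce these four inequalities to a single worst case. On $(0,\pi/2]$ the map $\ell\mapsto 1/\sin(\omega\ell)$ is decreasing, so $s_j\le 1/\sin(\omega\ell_{\min})$ for every $j$ while $\sum_{i\neq j}s_i\ge 3/\sin(\omega\ell_{\max})$, where $\ell_{\min}=\min_j\ell_j$ and $\ell_{\max}=\max_j\ell_j$. Hence it suffices to prove the single inequality $\sin(\omega\ell_{\max})<3\sin(\omega\ell_{\min})$, the extreme ``one short side against three long sides'' configuration. The role of the tail hypothesis $\ell_0\ge\ell_{\max}$ is precisely to force $\omega\ell_{\max}\le\pi/2$: decoupling $\Gamma^B$ at the NK vertex $A$ into four Dirichlet intervals and one Dirichlet--Neumann tail can only raise $\lambda_1$ (Theorem~\ref{rank1} with the coupling sent to $+\infty$), so $\lambda_1(\Gamma^B)\le\min\{\pi^2/\ell_{\max}^2,\ \pi^2/(4\ell_0^2)\}=\pi^2/(4\ell_0^2)\le\pi^2/(4\ell_{\max}^2)$, whence $\omega\ell_{\max}\le\pi/2$ (and in particular $\omega\ell_j\in(0,\pi)$ for all $j$, justifying the formulas above).

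With $\omega\ell_{\max}\le\pi/2$ in hand, set $u:=\omega\ell_{\min}$ and $r:=\ell_{\max}/\ell_{\min}\le\rho_0$, so that $ru=\omega\ell_{\max}\le\pi/2$ and therefore $u\le\pi/(2r)$. Using the elementary bounds $\sin(ru)<ru$ and $\sin u\ge u(1-u^2/6)$ (valid since $u\le\pi/2<\sqrt6$), the target inequality $\sin(ru)<3\sin u$ follows once $r\le 3\big(1-u^2/6\big)$. The right-hand side is decreasing in $u$, so the worst case is $u=\pi/(2r)$, where the requirement reads $r\le 3-\pi^2/(8r^2)$; multiplying by $r^2$ and dividing by $3$ this is exactly $r^2-r^3/3\ge\pi^2/24$. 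To finish I would note that $f(\rho):=\rho^2-\rho^3/3$ has $f'(\rho)=\rho(2-\rho)$, so $f$ increases on $[1,2]$ and decreases on $[2,3]$ with $f(\rho_0)=\pi^2/24$; hence $f(r)\ge\pi^2/24$ for every $r\in[1,\rho_0]$, which is guaranteed by $\rho_0\ell_{\min}\ge\ell_{\max}$. This gives the strict inequalities \eqref{Qk-necessary-sufficient}, and the conclusion of Theorem~\ref{main}(b) follows from Lemma~\ref{lemma-quadrangle} exactly as in the main proof.

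The main obstacle is the uniform eigenvalue bound $\omega\ell_{\max}\le\pi/2$: it is what confines all $\omega\ell_j$ to the interval $(0,\pi/2]$ where $1/\sin$ is monotone and the short-edge configuration is the only worst case. This bound relies essentially on the tail --- without it (as for $\Gamma_1$) a large positive coupling $\gamma_A$ can drive $\omega\ell_{\max}$ toward $\pi$, where $\sin(\omega\ell_{\max})\to0$ makes the \emph{longest} edge, rather than the shortest, the dangerous one, and the polygon inequality can genuinely fail. A secondary point requiring care is the bookkeeping that matches the elementary cubic $r^2-r^3/3\ge\pi^2/24$ to the defining equation of $\rho_0$, together with the verification that $f(r)\ge\pi^2/24$ holds on the whole admissible range $r\in[1,\rho_0]$ and not merely at the endpoint $r=\rho_0$.
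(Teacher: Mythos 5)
Your proposal is correct and is essentially the paper's own argument: the same explicit sine representation giving $|\varphi_j'(B)|\propto 1/\sin(\omega\ell_j)$, the same reduction to the shortest-edge inequality $\sin(\omega\ell_{\max})<3\sin(\omega\ell_{\min})$, the same Taylor bounds $\sin(ru)<ru$ and $\sin u\ge u-u^3/6$, and the same cubic $\rho^2-\rho^3/3\ge\pi^2/24$ matched to the defining equation of $\rho_0$. The only (welcome) additions are that you make explicit the rank-one Dirichlet decoupling at $A$ behind the eigenvalue bound $\omega\le\pi/(2\ell_0)$, which the paper asserts without justification, and the monotonicity analysis of the cubic on $[1,\rho_0]$.
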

\begin{proof}
  Without loss of generality, assume that $\ell_4 \geq \ell_3 \geq \ell_2 \geq \ell_1$.
  On the edge $e_j$ where $1 \leq j\leq 4$, we write $\varphi_j(x)=\alpha_j \sin(\beta x)$, where $0 \leq x \leq \ell_j$ , $\alpha_j\in \mathbb{R}$ and $\beta=\left(\lambda_1(\Gamma^B)\right)^{1/2}$. Here we identify the vertex $B$ as $x=0$ on each edge $e_j$.  Observe that 
  \begin{equation}
    \label{estimate-lambda}
    0<\beta=\lambda_1(\Gamma^B)^{1/2} \leq \min_{1\leq j \leq 4}\left\{\frac{\pi}{2\ell_0},\frac{\pi}{\ell_j}\right\}=\frac{\pi}{2\ell_0}
  \end{equation}
  This implies that $\beta \ell_j \in (0,\pi/2]$ for each $j$. So $\min_{1 \leq j \leq 4}|\sin(\beta \ell_j)|=\sin(\beta \ell_1)$.
  Moreover, from the fact that $\varphi_j(\ell_j) \neq 0$ and the continuity of $\varphi$ at the vertex $A$, we have
  $$\beta^{-1}\varphi_j'(B)=\alpha_j = \alpha_4 \cdot \frac{\sin(\beta \ell_4)}{\sin(\beta \ell_j)}$$
  Therefore, it is enough to show 
  \begin{equation}
    \label{estimate-sin}
    \sin(\beta \ell_1) \cdot \sum_{i=2}^4 \frac{1}{\sin(\beta \ell_i)}>1
  \end{equation}
  Put $\displaystyle \rho:=\frac{\ell_4}{\ell_1} \in [1,\rho_0]$ then we get 
  $$1-\frac{\pi^2}{24\rho^2}>\frac{\rho}{3}$$
  From \mref{estimate-lambda}, $\beta \ell_1<\frac{\pi \ell_1}{2 \ell_0}\leq \frac{\pi}{2\rho}$ and so it implies 
  \begin{equation}
    \label{estimate-rho}
    1 -  \frac{(\beta \ell_1)^2}{6} >\frac{\ell_4}{3\ell_1}
  \end{equation}
  Since $\sin(\beta \ell_1) \geq \beta \ell_1 -  (\beta \ell_1)^3/6$ and $\sin(\beta \ell_j) \leq (\beta \ell_4)$, \mref{estimate-sin} follows from \mref{estimate-rho}.
\end{proof}

\section{Discussion}
\label{sec:discussion}

Our Theorem \ref{main} provides a quantum graph counterexample to the
mentioned conjecture at the beginning of the paper, about the
genericity of non-degenerate spectral edges in spectra of
$\mathbb{Z}^d$-periodic quantum graphs, where $d>2$. Note that this
construction can also be modified to provide an example of a
$\mathbb{Z}^d$-discrete graph whose dispersion relation of the
discrete Laplacian operator contains a degenerate band edge. Indeed,
let $\Gamma_d$ be the graph with two vertices such that there are
exactly $d+1$ - edges between them and therefore, its maximal abelian
covering $X_d$ is a $d$-dimensional topological diamond.  One can
write down explicitly the dispersion relation of$X_d$ and then proceed
a similar calculation as in \cite{FilKach} to derive the degeneracy of
the extrema of the band functions.

Our construction of the graphs $\Gamma$ required that the dimension of
the dual torus of quasimomenta $k$ be of dimension at least three. The
same method and proof will still work if we increase the number of
edges connecting the two vertices $A$ and $B$ (at least four edges).
In fact, the entire mechanism of the proof is extremely robust: two
rank-one perturbations that reduce the number of cycles by 3 or more
help create the gaps between conductivity bands, while a continuum of
solutions to an equation similar to \eqref{Qk-condition} will make the
band edge degenerate. The degeneracy curve thus still has a natural
interpretation as the set of possible $(n-1)$-tuples of angles in a
planar $n$-gon with the given edge lengths; here $n$ is the number of
cycles broken by the rank one perturbation.  For $n>3$ the topology of
such objects becomes increasingly complicated.  Their homology groups
were studied by many authors, see \cite{FarSch} and references therein.

A heuristic reason for the degeneracy may be put forward using the
classical idea of Wigner and von~Neumann: a family of complex
Hermitian matrices depending on 3 parameters is expected to have
isolated point degeneracies (where a pair of eigenvalues meet).  This
is what the eigenvalues want to do here, but there are hard bounds
\eqref{eq:rank1_B} and \eqref{eq:rank1_A} from the rank one
perturbations, so the eigenvalues instead accumulate at the bound.

\section{Acknowledgments}
The work of the first author was partially supported by NSF
DMS--1815075 grant and the work of the second author was partially
supported by the AMS--Simons Travel grant. Both authors express their
gratitude to Peter Kuchment for introducing them to this exciting
topic and to Lior Alon and Ram Band for many deep discussions.  We
thank an anonymous referee for several improving suggestions.

\begin{bibdiv}
  \begin{biblist}
 \bib{SSP}{book}{
    AUTHOR = {Ashcroft, N. W.},
    AUTHOR = {Mermin, N. D.},
    TITLE = {Solid State Physics},
    PUBLISHER = {Holt, Rinehart and Winston, New York-London},
      YEAR = {1976},
}

\bib{Baez}{unpublished}{
AUTHOR = {Baez, J.},
title = {Topological Crystals},
year = {2016},
note = {preprint {\tt arXiv:1607.07748}},
}

\bib{BanBer_prl13}{article}{
  title = {Universality of the Momentum Band Density of Periodic Networks},
  author = {Band, R.},
  author = {Berkolaiko, G.},
  journal = {Phys. Rev. Lett.},
  volume = {111},
  issue = {13},
  pages = {130404},
  numpages = {5},
  year = {2013},
  month = {Sep},
  publisher = {American Physical Society},
  doi = {10.1103/PhysRevLett.111.130404},
}

\bib{BanBerJoyLiu_prep17}{unpublished}{
  author = {Band, R.},
  author = {Berkolaiko, G.},
  author = {Joyner, C. H.},
  author = {Liu, W.},
  title = {Quotients of finite-dimensional operators by symmetry representations},
  year = {2017},
  note = {preprint {\tt arXiv:1711.00918}},
}

\bib{BarGas_jsp00}{article}{
    AUTHOR = {Barra, F.},
    AUTHOR = {Gaspard, P.},
     TITLE = {On the level spacing distribution in quantum graphs},
   JOURNAL = {J. Statist. Phys.},
    VOLUME = {101},
      YEAR = {2000},
    NUMBER = {1--2},
     PAGES = {283--319},
   MRCLASS = {81Q50 (82B10)},
       DOI = {10.1023/A:1026495012522},
}
    
\bib{Ber_incol17}{incollection}{
    AUTHOR = {Berkolaiko, Gregory},
     TITLE = {An elementary introduction to quantum graphs},
 BOOKTITLE = {Geometric and computational spectral theory},
    SERIES = {Contemp. Math.},
    VOLUME = {700},
     PAGES = {41--72},
 PUBLISHER = {Amer. Math. Soc., Providence, RI},
      YEAR = {2017},
       DOI = {10.1090/conm/700/14182},
}
    
\bib{BerCom_jst18}{article}{
    AUTHOR = {Berkolaiko, Gregory},
    AUTHOR = {Comech, Andrew},
     TITLE = {Symmetry and {D}irac points in graphene spectrum},
   JOURNAL = {J. Spectr. Theory},
    VOLUME = {8},
      YEAR = {2018},
    NUMBER = {3},
     PAGES = {1099--1147},
  note = 	 {preprint {\tt arXiv:1412.8096}},
       DOI = {10.4171/JST/223},
}

\bib{BKKM}{article}{
    AUTHOR = {Berkolaiko, G.},
    AUTHOR = {Kennedy, J.B.},
    AUTHOR = {Kurasov, P.},
    AUTHOR = {Mugnolo, D.},
     TITLE = {Surgery principles for the spectral analysis of quantum graphs},
   JOURNAL = {Trans. Amer. Math. Soc},
      YEAR = {2019},
      MISC ={https://arxiv.org/abs/1807.08183}
}

\bib{BerKuc_incol12}{incollection}{
  AUTHOR = {Berkolaiko, G.},
   AUTHOR = {Kuchment, P.},
  BOOKTITLE = {Spectral Geometry},
  TITLE = {Dependence of the spectrum of a quantum graph on vertex 
    conditions and edge lengths},
  SERIES = {Proceedings of Symposia in Pure Mathematics},
  VOLUME = {84},
  NOTE = {preprint {\tt arXiv:1008.0369}},
  PUBLISHER = {American Math. Soc.},
  YEAR = {2012},
}

\bib{BK}{book}{
 AUTHOR = {Berkolaiko, Gregory},
  AUTHOR = {Kuchment, Peter},
     TITLE = {Introduction to quantum graphs},
    SERIES = {Mathematical Surveys and Monographs},
    VOLUME = {186},
 PUBLISHER = {American Mathematical Society, Providence, RI},
      YEAR = {2013},
     PAGES = {xiv+270},
      ISBN = {978-0-8218-9211-4},
   MRCLASS = {81Q35 (05C90 31C20 34B24 34B45 81Q50)},
  MRNUMBER = {3013208},
MRREVIEWER = {Delio Mugnolo},
}

\bib{BerWin_tams10}{article}{
    AUTHOR = {Berkolaiko, G.},
    AUTHOR = {Winn, B.},
     TITLE = {Relationship between scattering matrix and spectrum of quantum
              graphs},
   JOURNAL = {Trans. Amer. Math. Soc.},
  FJOURNAL = {Transactions of the American Mathematical Society},
    VOLUME = {362},
      YEAR = {2010},
    NUMBER = {12},
     PAGES = {6261--6277},
   MRCLASS = {81Q35 (34B45 34L20)},
       DOI = {10.1090/S0002-9947-2010-04897-4},
}

\bib{Bir}{article}{
    AUTHOR = {Birman, M. Sh.},
     TITLE = {On the homogenization for periodic operators in a neighborhood
of an edge of an internal gap},
   JOURNAL = {Algebra i Analiz},
    VOLUME = {15},
      YEAR = {2003},
      NUMBER ={4},
     PAGES = {61-71},
}
\bib{BirSus2000}{article}{
       AUTHOR = {Birman, M. Sh.},
   AUTHOR = {Suslina, T. A.},
     TITLE = {Threshold effects near the lower edge of the spectrum for
              periodic differential operators of mathematical physics},
 BOOKTITLE = {Systems, approximation, singular integral operators, and
              related topics ({B}ordeaux, 2000)},
    SERIES = {Oper. Theory Adv. Appl.},
    VOLUME = {129},
     PAGES = {71--107},
 PUBLISHER = {Birkh\"{a}user, Basel},
      YEAR = {2001},
}
\bib{BirSus2004}{article}{
       AUTHOR = {Birman, M. Sh.},
   AUTHOR = {Suslina, T. A.},
     TITLE = {Homogenization of a multidimensional periodic elliptic
              operator in a neighborhood of an edge of an inner gap},
   JOURNAL = {Zap. Nauchn. Sem. S.-Peterburg. Otdel. Mat. Inst. Steklov.
              (POMI)},
  FJOURNAL = {Rossi\u{\i}skaya Akademiya Nauk. Sankt-Peterburgskoe Otdelenie.
              Matematicheski\u{\i} Institut im. V. A. Steklova. Zapiski Nauchnykh
              Seminarov (POMI)},
    VOLUME = {318},
      YEAR = {2004},
    NUMBER = {Kraev. Zadachi Mat. Fiz. i Smezh. Vopr. Teor. Funkts. 36 [35]},
     PAGES = {60--74, 309},
      ISSN = {0373-2703},
   MRCLASS = {35B27 (35B10 35J15 47F05)},
  MRNUMBER = {2120232},
}
    
\bib{DKS}{article}{
 AUTHOR = {Do, Ngoc},
  AUTHOR = {Kuchment, Peter},
  AUTHOR ={Sottile, Frank},
     TITLE = {Generic properties of dispersion relations for discrete periodic operators},
   year = {2019},
note = {preprint {\tt arXiv:1910.06472}},
}

\bib{ExnTur_jpa17}{article}{
    AUTHOR = {Exner, Pavel},
    AUTHOR = {Turek, Ond\v{r}ej},
     TITLE = {Periodic quantum graphs from the {B}ethe-{S}ommerfeld
              perspective},
   JOURNAL = {J. Phys. A},
  FJOURNAL = {Journal of Physics. A. Mathematical and Theoretical},
    VOLUME = {50},
      YEAR = {2017},
    NUMBER = {45},
     PAGES = {455201, 32},
       DOI = {10.1088/1751-8121/aa8d8d},
}

\bib{FarSch}{article}{
    AUTHOR = {Farber, M.},
    AUTHOR = {Sch\"{u}tz, D.},
     TITLE = {Homology of planar polygon spaces},
   JOURNAL = {Geom. Dedicata},
  FJOURNAL = {Geometriae Dedicata},
    VOLUME = {125},
      YEAR = {2007},
     PAGES = {75--92},
      ISSN = {0046-5755},
   MRCLASS = {58D29 (57R70)},
  MRNUMBER = {2322541},
}

\bib{FilKach}{article}{
    AUTHOR = {Filonov, Nikolay},
    AUTHOR = {Kachkovskiy, Ilya},
     TITLE = {On the structure of band edges of 2-dimensional periodic
              elliptic operators},
   JOURNAL = {Acta Math.},
  FJOURNAL = {Acta Mathematica},
    VOLUME = {221},
      YEAR = {2018},
    NUMBER = {1},
     PAGES = {59--80},
      ISSN = {0001-5962},
   MRCLASS = {35J10 (35P05 47F05 58J05)},
  MRNUMBER = {3877018},
}


\bib{KapMil_jdg95}{article}{
    AUTHOR = {Kapovich, Michael},
    AUTHOR = {Millson, John},
     TITLE = {On the moduli space of polygons in the {E}uclidean plane},
   JOURNAL = {J. Differential Geom.},
    VOLUME = {42},
      YEAR = {1995},
    NUMBER = {1},
     PAGES = {133--164},
}

\bib{Kha}{article}{
    AUTHOR = {Kha, Minh},
     TITLE = {Green's function asymptotics of periodic elliptic operators on
              abelian coverings of compact manifolds},
   JOURNAL = {J. Funct. Anal.},
  FJOURNAL = {Journal of Functional Analysis},
    VOLUME = {274},
      YEAR = {2018},
    NUMBER = {2},
     PAGES = {341--387},
      ISSN = {0022-1236},
   MRCLASS = {58J37 (35J08 35J10 35J15 35P05 58J05 58J50)},
  MRNUMBER = {3724142},
MRREVIEWER = {Leonard Todjihounde},
}

\bib{KKR}{article}{
    AUTHOR = {Kha, Minh},
    AUTHOR = {Kuchment, Peter},
    AUTHOR = {Raich, Andrew},
     TITLE = {Green's function asymptotics near the internal edges of
              spectra of periodic elliptic operators. {S}pectral gap
              interior},
   JOURNAL = {J. Spectr. Theory},
  FJOURNAL = {Journal of Spectral Theory},
    VOLUME = {7},
      YEAR = {2017},
    NUMBER = {4},
     PAGES = {1171--1233},
      ISSN = {1664-039X},
   MRCLASS = {35J08 (35J10 35J15 35P05 47A10 81R12)},
  MRNUMBER = {3737891},
MRREVIEWER = {Michael A. Perelmuter},
}

\bib{KS}{article}{
    AUTHOR = {Kirsch, Werner},
    AUTHOR = {Simon, Barry},
     TITLE = {Comparison theorems for the gap of {S}chr\"odinger operators},
   JOURNAL = {J. Funct. Anal.},
  FJOURNAL = {Journal of Functional Analysis},
    VOLUME = {75},
      YEAR = {1987},
    NUMBER = {2},
     PAGES = {396--410},
      ISSN = {0022-1236},
     CODEN = {JFUAAW},
   MRCLASS = {35P15 (35J10 81C10 82A05)},
  MRNUMBER = {916759 (89b:35127)},
MRREVIEWER = {David Gurarie},
}

\bib{KR}{article}{
    AUTHOR = {Klopp, Fr{\'e}d{\'e}ric},
    AUTHOR = {Ralston, James},
     TITLE = {Endpoints of the spectrum of periodic operators are
              generically simple},
   JOURNAL = {Methods Appl. Anal.},
  FJOURNAL = {Methods and Applications of Analysis},
    VOLUME = {7},
      YEAR = {2000},
    NUMBER = {3},
     PAGES = {459--463},
      ISSN = {1073-2772},
   MRCLASS = {47F05 (35B10 35J10 35P05 47A10)},
  MRNUMBER = {1869296 (2002i:47055)},
MRREVIEWER = {George D. Raikov},
}

\bib{KOS}{article}{
    AUTHOR = {Kobayashi, Toshiyuki},
    AUTHOR = {Ono, Kaoru},
    AUTHOR = {Sunada, Toshikazu},
     TITLE = {Periodic {S}chr\"odinger operators on a manifold},
   JOURNAL = {Forum Math.},
  FJOURNAL = {Forum Mathematicum},
    VOLUME = {1},
      YEAR = {1989},
    NUMBER = {1},
     PAGES = {69--79},
      ISSN = {0933-7741},
     CODEN = {FOMAEF},
   MRCLASS = {58G25 (35J10 47F05)},
  MRNUMBER = {978976 (89k:58288)},
MRREVIEWER = {Harold Donnelly},
}

\bib{KosSch_cmp03}{article}{
    AUTHOR = {Kostrykin, V. and Schrader, R.},
     TITLE = {Quantum wires with magnetic fluxes},
   JOURNAL = {Comm. Math. Phys.},
    VOLUME = {237},
      YEAR = {2003},
    NUMBER = {1-2},
     PAGES = {161--179},
   MRCLASS = {81Q99 (47B25 47N50)},
}

\bib{Kbook}{book}{
    AUTHOR = {Kuchment, Peter},
     TITLE = {Floquet theory for partial differential equations},
    SERIES = {Operator Theory: Advances and Applications},
    VOLUME = {60},
 PUBLISHER = {Birkh\"auser Verlag, Basel},
      YEAR = {1993},
     PAGES = {xiv+350},
      ISBN = {3-7643-2901-7},
   MRCLASS = {35-02 (35C15 35P10 47N20)},
  MRNUMBER = {1232660 (94h:35002)},
MRREVIEWER = {Yehuda Pinchover},
}

\bib{Ksurvey}{article}{
    AUTHOR     = {Kuchment, Peter},
     TITLE        = {An overview of periodic elliptic operators},
    JOURNAL    = {Bulletin (New Series) of the American Mathematical Society},
    VOLUME     = {53},
    YEAR         = {July 2016},
    NUMBER     = {3},
     PAGES      = {343-414},
     URL          = {http://dx.doi.org/10.1090/bull/1528},
}

\bib{KP1}{article}{
    AUTHOR = {Kuchment, Peter},
    AUTHOR = {Pinchover, Yehuda},
     TITLE = {Integral representations and {L}iouville theorems for
              solutions of periodic elliptic equations},
   JOURNAL = {J. Funct. Anal.},
  FJOURNAL = {Journal of Functional Analysis},
    VOLUME = {181},
      YEAR = {2001},
    NUMBER = {2},
     PAGES = {402--446},
      ISSN = {0022-1236},
     CODEN = {JFUAAW},
   MRCLASS = {35J05 (35B10 35C15)},
  MRNUMBER = {1821702 (2001m:35067)},
MRREVIEWER = {Fr{\'e}d{\'e}ric Klopp},
}

\bib{KP2}{article}{
    AUTHOR = {Kuchment, Peter},
    AUTHOR = {Pinchover, Yehuda},
     TITLE = {Liouville theorems and spectral edge behavior on abelian
              coverings of compact manifolds},
   JOURNAL = {Trans. Amer. Math. Soc.},
  FJOURNAL = {Transactions of the American Mathematical Society},
    VOLUME = {359},
      YEAR = {2007},
    NUMBER = {12},
     PAGES = {5777--5815},
      ISSN = {0002-9947},
     CODEN = {TAMTAM},
   MRCLASS = {58J05 (35B05 35J15 35P05 58J50)},
  MRNUMBER = {2336306 (2008h:58037)},
MRREVIEWER = {Alberto Parmeggiani},
}

\bib{KucRai}{article}{
    AUTHOR = {Kuchment, Peter},
    AUTHOR = {Raich, Andrew}
     TITLE = {Green's function asymptotics near the internal edges of
              spectra of periodic elliptic operators. {S}pectral edge case},
   JOURNAL = {Math. Nachr.},
  FJOURNAL = {Mathematische Nachrichten},
    VOLUME = {285},
      YEAR = {2012},
    NUMBER = {14-15},
     PAGES = {1880--1894},
      ISSN = {0025-584X},
   MRCLASS = {35J08 (31B35 35J15 35P05 47A10)},
  MRNUMBER = {2988010},
}

\bib{KucZha_jmp19}{article}{
    AUTHOR = {Kuchment, Peter},
    AUTHOR = {Zhao, Jia},
     TITLE = {Analyticity of the spectrum and {D}irichlet-to-{N}eumann
              operator technique for quantum graphs},
   JOURNAL = {J. Math. Phys.},
  FJOURNAL = {Journal of Mathematical Physics},
    VOLUME = {60},
      YEAR = {2019},
    NUMBER = {9},
     PAGES = {093502, 8},
       DOI = {10.1063/1.5110193},
}
    
\bib{Kur_lmp19}{article}{
    AUTHOR = {Kurasov, Pavel},
     TITLE = {On the ground state for quantum graphs},
   JOURNAL = {Lett. Math. Phys.},
  FJOURNAL = {Letters in Mathematical Physics},
    VOLUME = {109},
      YEAR = {2019},
    NUMBER = {11},
     PAGES = {2491--2512},
       DOI = {10.1007/s11005-019-01192-w},
}

\bib{Mugnolo_book}{book}{
    AUTHOR = {Mugnolo, D.},
     TITLE = {Semigroup methods for evolution equations on networks},
    SERIES = {Understanding Complex Systems},
 PUBLISHER = {Springer, Cham},
      YEAR = {2014},
     PAGES = {xvi+286},
      ISBN = {978-3-319-04620-4; 978-3-319-04621-1},
   MRCLASS = {34B45 (34G10 35R02)},
       DOI = {10.1007/978-3-319-04621-1},
}

\bib{ParSht}{article}{
    AUTHOR = {Parnovski, Leonid},
    AUTHOR = {Shterenberg, Roman},
     TITLE = {Perturbation theory for spectral gap edges of 2{D} periodic
              {S}chr\"{o}dinger operators},
   JOURNAL = {J. Funct. Anal.},
  FJOURNAL = {Journal of Functional Analysis},
    VOLUME = {273},
      YEAR = {2017},
    NUMBER = {1},
     PAGES = {444--470},
      ISSN = {0022-1236},
   MRCLASS = {81Q10 (35J10 35P20 47A55)},
  MRNUMBER = {3646305},
MRREVIEWER = {Peter N. Zhevandrov},
}

\bib{Sunada}{book}{
    AUTHOR = {Sunada, Toshikazu},
     TITLE = {Topological crystallography},
    SERIES = {Surveys and Tutorials in the Applied Mathematical Sciences},
    VOLUME = {6},
      NOTE = {With a view towards discrete geometric analysis},
 PUBLISHER = {Springer, Tokyo},
      YEAR = {2013},
     PAGES = {xii+229},
      ISBN = {978-4-431-54176-9; 978-4-431-54177-6},
   MRCLASS = {20H15 (05C10 57M15 60G50)},
  MRNUMBER = {3014418},
MRREVIEWER = {Karel Dekimpe},
}

\end{biblist}
\end{bibdiv}
\end{document}